\newcommand{\A}{\mathcal{A}}
\newcommand{\E}{\mathbb{E}}
\def\A{\mathcal A}
\def\B{\mathcal B}
\def\K{\mathcal K}
\def\B{\mathcal B}
\def\H{\mathcal H}
\def\K{\mathcal K}
\def\L{\mathcal L}
\def\M{\mathcal M}
\def\CCC{\mathbb C}
\def\NNN{\mathbb N}
\def\amslatex{$\mathcal{A}\kern-.1667em\lower.5ex\hbox{$M$}\kern-.125em\mathcal{S}$-\LaTeX}
\DeclareSymbolFont{SY}{U}{psy}{m}{n}
\DeclareMathSymbol{\emptyset}{\mathord}{SY}{'306}
\theoremstyle{plain}
\newtheorem{thm}{Theorem}[section]
\newtheorem{cor}[thm]{Corollary}
\newtheorem{lem}[thm]{Lemma}
\theoremstyle{definition}
\numberwithin{equation}{section}
\begin{document}
\title{strong sums of projections in type ${\rm II}$ factors}
\author{ Xinyan Cao, Junsheng Fang and Zhaolin Yao}
\curraddr[X. Cao and J. Fang]{School of Mathematical Sciences, Hebei Normal University,
Shijiazhuang, Hebei 050016, China}
\curraddr[Z. Yao]{Postdoctoral Research Station of Mathematics, Hebei Normal University, Shijiazhuang, Hebei 050016, China}
\email[X.Cao]{caoxinyan123@qq.com}
\email[J.Fang]{jfang@hebtu.edu.cn}
\email[Z.Yao]{zyao@hebtu.edu.cn}

\thanks{The authors were supported  by National Natural
Science Foundation of China (Grant No.12071109) and a start up funding from Hebei Normal Univeristy.}

\subjclass[2010]{46L10, 46L36} \keywords{sums of projections, type ${\rm II}$ factors}
\begin{abstract}
Let $\M$ be a type ${\rm II}$ factor and let $\tau$ be the faithful positive semifinite normal trace, unique up to scalar multiples in the type ${\rm II}_\infty$ case
and normalized by $\tau(I)=1$ in the type ${\rm II}_1$ case. Given $A\in \M^+$, we denote by $A_+=(A-I)\chi_A(1,\|A\|]$ the excess part of $A$ and by $A_-=(I-A)\chi_A(0,1)$
the defect part of $A$. In~\cite{KNZ}, V. Kaftal, P. Ng and S. Zhang provided necessary and sufficient conditions for a positive operator to be the sum of a finite or infinite collection of projections (not necessarily mutually orthogonal) in type ${\rm I}$ and type ${\rm III}$ factors. For type ${\rm II}$ factors, V. Kaftal, P. Ng and S. Zhang proved that $\tau(A_+)\geq \tau(A_-)$ is a necessary condition for an operator $A\in \M^+$ which can be written as the sum of a finite or infinite collection of projections and also sufficient if the operator is ``diagonalizable". In this paper, we prove that if $A\in \M^+$ and $\tau(A_+)\geq \tau(A_-)$, then $A$ can be written as the sum of a finite or infinite collection of projections. This result answers affirmatively  Question 5.4 of~\cite{KNZ}.
\end{abstract}

\maketitle
\section{Introduction}
Which positive bounded linear operators on a separable Hilbert space can be written as sums of projections? If the underlying space is finite dimensional, then a characterization of such operators was obtained by Fillmore~\cite{Fil}: a finite-dimensional operator is the sum of projections if and only if it is positive, it has an integer trace and the trace is greater than or equal to its rank.

For infinite sums with convergence in the strong operator topology, this question arose naturally from work on ellipsoidal tight frames by Dykema, Freeman, Kornelson, Larson, Ordower, and Weber in~\cite{DFK}. They proved that a sufficient condition for a positive bounded linear operator $A\in \B(\H)^+$ to be the sum of projections is that its essential norm $\|A\|_e$ is larger than one. The same question can be asked relative to a von Neumann algebra $\M$. This question was first asked by Kaftal, Ng and Zhang in~\cite{KNZ}. In~\cite{KNZ}, Kaftal, Ng and Zhang developed beautiful techniques and addressed the question completely for positive operators in type ${\rm I}$ and type ${\rm III}$ factors. They proved the following results. Let $\M$ be of type ${\rm I}$ factor. Then $A$ is a strong sum of projections if and only if either ${\rm Tr}(A_+)=\infty$ or ${\rm Tr}(A_-)\leq {\rm Tr}(A_+)<\infty$ and ${\rm Tr}(A_+)-{\rm Tr}(A_-)\in \NNN\cup\{0\}$. Let $\M$ be of type ${\rm III}$ factor. Then $A$ is a strong sum of projections if and only if either $\|A\|>1$ or $A$ is a projection. For type ${\rm II}$ factor, they proved that if $A$ is diagonalizable, then $A$ is a strong sum of projections if and only if  $\tau(A_+)\geq \tau(A_-)$. The condition  $\tau(A_+)\geq \tau(A_-)$ is necessary even when $A$ is not diagonalizable. Recall that a positive operator $A$ is said to be diagonalizable if $A=\sum \gamma_j E_j$ for some $\gamma_j>0$ and mutually orthogonal projections $\{E_j\}$ in $\M$.
The main result of this article is the following theorem.

\begin{thm}
Let $\M$ be a type ${\rm II}$ factor and let $\tau$ be the faithful positive semifinite normal trace, unique up to scalar multiples in the type ${\rm II}_\infty$ case
and normalized by $\tau(I)=1$ in the type ${\rm II}_1$ case. Given $A\in \M^+$, we denote by $A_+=(A-I)\chi_A(1,\|A\|]$ the excess part of A and by $A_-=(I-A)\chi_A(0,1)$
the defect part of $A$. If $A\in \M^+$ and $\tau(A_+)\geq \tau(A_-)$, then $A$ can be written as the sum of a finite or infinite collection of projections.
\end{thm}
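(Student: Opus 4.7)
My plan is to reduce to the diagonalizable case of~\cite{KNZ} by peeling off a finite or countable family of projections $\{P_n\}$, each satisfying $P_n\le A-\sum_{k<n}P_k$, so that the residual $R=A-\sum_n P_n$ is diagonalizable and still satisfies $\tau(R_+)\ge\tau(R_-)$. The obstruction to applying~\cite{KNZ} directly is the continuous part of the spectral measure of $A$, so each $P_n$ must straddle the spectral cut at $1$, simultaneously absorbing controlled amounts of excess and defect.

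The core technical device is the elementary fact that for $\alpha,\beta\ge 0$ with $\alpha+\beta=2$ and $\alpha>1>\beta$, the matrix $\mathrm{diag}(\alpha,\beta)\in M_2(\CCC)$ is the sum of two rank-one projections. In a type II factor, whenever $p_0\le\chi_A(1,\|A\|]$ and $q_0\le\chi_A(0,1)$ are subprojections of equal trace on which $A$ takes approximate values $\alpha,\beta$ with $\alpha+\beta=2$, a partial isometry $v\in\M$ with $v^*v=p_0$, $vv^*=q_0$ transports the $M_2(\CCC)$ decomposition to two projections $P_1,P_2\in\M$ with $P_1+P_2=\alpha p_0+\beta q_0\le A$, via the identification $(p_0+q_0)\M(p_0+q_0)\cong M_2(p_0\M p_0)$. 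The main steps are: (i) dyadically slice the spectral measures of $A_+$ and $A_-$; (ii) match each defect slice at level $x$ with an excess slice at the same level $x$, so that $\alpha=1+x$ and $\beta=1-x$; (iii) when levels do not exactly match, split a spectral piece $(1+y)p_0$ with $y>x$ into $(1+x)p_0+(y-x)p_0$, using the $(1+x)p_0$ part for matching and placing $(y-x)p_0$ into a diagonalizable positive residual; (iv) apply the $M_2$ construction to each matched pair to produce two projections $\le A$; (v) handle the accumulated diagonalizable residual, which after peeling has vanishing defect, via the diagonalizable half of~\cite{KNZ}.

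The main obstacle is step (iii), the redistribution of excess across spectral levels: the hypothesis $\tau(A_+)\ge\tau(A_-)$ is a global trace inequality, not a pointwise domination of $d\mu_+$ by $d\mu_-$, so excess mass must be shuffled across levels to pair with defect mass, and each such split risks creating new defect in the residual (a chunk $(y-x)p_0$ with $y-x<1$ is itself in the defect region). Controlling this potential cascade requires a careful inductive construction in which the dyadic slicing is arranged so that the newly created defects are themselves matched at later stages, with the trace inequality propagated as an invariant. Additional care is needed when $\|A\|>2$ (iterative peeling of support projections on the high-spectrum region) and in the type II$_\infty$ case with $\tau(A_+)=\infty$ (a greedy exhaustion of the remaining excess after the defect is gone); throughout, strong-operator convergence of $\sum_nP_n$ must be verified by bounding the partial sums monotonically by $A$.
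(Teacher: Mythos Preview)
Your approach is genuinely different from the paper's, and it has a real gap precisely where you flag the ``main obstacle''.

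The paper never reduces to the diagonalizable case. In the type ${\rm II}_1$ setting with $R_A=I$ and $\tau(A)=1$, they set $X=A-I$, so $\tau(X)=0$, and prove an isotropic-subspace lemma (their Theorem~2.4): any self-adjoint $X\in\M\cap L^1(\M,\tau)$ with $\tau(X)=0$ admits an orthogonal decomposition $I=\sum_nE_n$ with $E_nXE_n=0$ for all $n$. Then $E_nAE_n=E_n$, and Proposition~3.1 of~\cite{KNZ} converts this directly into $A=\sum_n P_n$ with $P_n=A^{1/2}E_nA^{1/2}$ projections. The general ${\rm II}_1$ case follows by scaling $A$ to trace $1$ and writing each $sP_n$ ($s\ge1$) as a strong sum of projections via~\cite{KNZ}; the ${\rm II}_\infty$ case is handled by cutting with finite spectral projections and invoking the ${\rm II}_1$ result on the pieces. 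No spectral matching or peeling below $A$ is used.

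The gap in your plan is that the $M_2(\CCC)$ device yields $P_1+P_2=\alpha p_0+\beta q_0$ only when $A$ equals \emph{exactly} $\alpha$ on $p_0$ and $\beta$ on $q_0$. On a spectral slice $p_0=\chi_A[1+y-\delta,1+y]$ of a non-diagonalizable $A$, the restriction $Ap_0$ is not the scalar $(1+y)p_0$; it has continuous spectrum filling $[1+y-\delta,1+y]$. If you take $\alpha,\beta$ to be the infima on the two slices so that $\alpha p_0+\beta q_0\le A$, you can align the slices to force $\alpha+\beta=2$ and peel off two genuine projections, but the leftover $A(p_0+q_0)-\alpha p_0-\beta q_0$ is then a positive, \emph{non-diagonalizable} operator of norm $\le\delta<1$. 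Summed over all slices, the residual $R=A-\sum_nP_n$ is non-diagonalizable with $\|R\|<1$, so $R_+=0$ while $\tau(R_-)>0$; neither the diagonalizable theorem of~\cite{KNZ} nor the hypothesis $\tau(R_+)\ge\tau(R_-)$ applies to it. Your step~(iii), which speaks of splitting ``a spectral piece $(1+y)p_0$'', already presupposes that $A$ acts as a scalar on $p_0$---exactly the diagonalizability you are trying to reach. The operator-valued variant of the $M_2$ trick would require $Ap_0-p_0$ and $q_0-v^*(Aq_0)v$ to coincide as operators in $p_0\M p_0$, and the spectral distributions of $A_+$ and $A_-$ need not permit any such matching; the global inequality $\tau(A_+)\ge\tau(A_-)$ does not give you this.
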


This result answers affirmatively Question 5.4 of~\cite{KNZ}. To prove this theorem, we develop some new techniques in this paper. On the other hand, several results, e.g., Proposition 3.1, Theorem 5.2 and Theorem 6.6, in~\cite{KNZ}  play important roles in the proof.

The organization of this paper is as follows. In section 2 below, we prove the following result. Let $\M$ be a semifinite factor with a faithful normal trace $\tau$ and let $X=X^*\in \M\cap L^1(\M,\tau)$ such that $\tau(X)=0$. Then there is a family of nonzero projections $\{E_\alpha\}$ in $\M$ such that $\sum_\alpha E_\alpha=I$ and $E_\alpha XE_\alpha=0$ for all $\alpha$. This result is inspired by~\cite{C-W} and plays a key role throughout the paper. In section 3, we prove that let $\M$ be a type ${\rm II}_1$ factor and let $\tau$ be the faithful positive normal trace normalized by $\tau(I)=1$. If $A\in \M^+$ and $\tau(A_+)\geq \tau(A_-)$, then $A$ can be written as the sum of a finite or infinite collection of projections. In section 4, we consider the similar question in type ${\rm II}_\infty$ factors. We prove that let $\M$ be a type ${\rm II}_\infty$ factor and let $\tau$ be a faithful positive normal tracial weight on $\M$. If $A\in \M^+$ and $\tau(A_+)\geq \tau(A_-)$, then $A$ can be written as the sum of a finite or infinite collection of projections. In section 5, we give a new proof of the main theorem of~\cite{KNZ} for type ${\rm I}$ factors.

We refer to~\cite{C-W,G-P,KNZ1,KNZ2,KNZ3,KNZ4,KRS,KRS2,Mar,Nak} for interesting results on the related topics. We refer to~\cite{K-R,S-Z,Tak} for the basic knowledge of von Neumann algebras.

\section{Isotropic subspaces of operators in semifinite factors}

Let $T$ be a bounded linear operator on a Hilbert space $\H$. A (closed) subspace $\K$ of $\H$ is \emph{isotropic} for $T$ if $\langle T\xi,\xi\rangle=0$ for all $\xi\in \K$. This is the same as saying that $P_\K T P_\K=0$.

\begin{lem}\label{L:lemma 2.1}
Let $\M$ be a semifinite factor with a faithful normal trace $\tau$ and let $X=X^*\in \M\cap L^1(\M,\tau)$ such that $\tau(X)=0$. Then there is a nonzero projection $E\in \M$ such that $EXE=0$.
\end{lem}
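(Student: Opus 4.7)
The plan is to construct the projection $E$ explicitly inside a $2\times 2$-matrix block subalgebra of $\M$. Assume $X\neq 0$; then $\tau(X_+)=\tau(X_-)>0$ and both spectral projections $P_\pm$ are nonzero. Since $\M$ is semifinite, I would pick nonzero subprojections $Q_-\le P_-$ and $Q_+\le P_+$ of equal finite trace; they are Murray-von Neumann equivalent in $\M$, so there is a partial isometry $v\in\M$ with $v^*v=Q_-$ and $vv^*=Q_+$. The $*$-subalgebra of $\M$ generated by $Q_\pm,v,v^*$ is isomorphic to $M_2(\B)$, where $\B:=Q_-\M Q_-$ is a finite factor (this is where the semifiniteness of $\M$ enters).

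Because $Q_\pm$ sit in the disjoint spectral subspaces of $X$, we have $Q_-XQ_+=0$; thus in the $M_2(\B)$ picture $X$ corresponds to $\begin{pmatrix}-R & 0\\ 0 & S\end{pmatrix}$, where $R:=Q_-X_-Q_-\ge 0$ and $S:=v^*(Q_+X_+Q_+)v\ge 0$ are positive elements of $\B$. I would then search for $E$ of the form $E=\xi\xi^*$ with $\xi=(a,b)^T$, $a,b\in\B$, $a^*a+b^*b=Q_-$; such an $E$ is automatically a projection in $M_2(\B)\subset\M$, and a direct calculation yields $EXE=\xi(-a^*Ra+b^*Sb)\xi^*$. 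Hence it suffices to arrange $a^*Ra=b^*Sb$ in $\B$.

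To solve this equation, fix a MASA $\A\subset\B$. By the classical fact that in a finite factor every self-adjoint element is unitarily conjugate to an element of any prescribed MASA, I can choose unitaries $u_1,u_2\in\B$ with $A:=u_1^*Ru_1\in\A$ and $B:=u_2^*Su_2\in\A$. Working inside the abelian $\A$, set $p:=\sqrt{B/(A+B)}$ and $q:=\sqrt{A/(A+B)}$ (extended arbitrarily on the kernel of $A+B$ so that $p^2+q^2=Q_-$), and put $a:=u_1p$, $b:=u_2q$. Then $a^*a+b^*b=p^2+q^2=Q_-$ and, by commutativity in $\A$,
\[
a^*Ra=pAp=Ap^2=\tfrac{AB}{A+B}=Bq^2=qBq=b^*Sb,
\]
so $EXE=0$, and $\tau(E)=\tau(Q_-)>0$ guarantees $E\neq 0$. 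The main technical obstacle is the potential non-invertibility of $A+B$ in the formulas for $p,q$; this is handled by the extension on the kernel (consistent with $p^2A=q^2B=0$ there) or, equivalently, by first shrinking $Q_\pm$ to subprojections on which $R$ and $S$ are bounded below.
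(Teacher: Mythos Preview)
Your overall strategy---reducing to a $2\times 2$ block over a finite corner $\B=Q_-\M Q_-$ and solving $a^*Ra=b^*Sb$ with $a^*a+b^*b=1_\B$---is sound and pleasantly different from the paper's route via polar decomposition, Lemma~2.2 and unbounded affiliated operators. However, the step where you invoke the ``classical fact that in a finite factor every self-adjoint element is unitarily conjugate to an element of any prescribed MASA'' is a genuine gap: this statement is \emph{false} in ${\rm II}_1$ factors. If $x$ is a self-adjoint generator of a singular MASA $\A_0$ and $\A$ is a Cartan MASA (both exist, e.g., in the hyperfinite ${\rm II}_1$ factor by Popa's work), then any unitary $u$ with $u^*xu\in\A$ would give $u^*\A_0u=W^*(u^*xu)\subseteq\A$; since $u^*\A_0u$ is maximal abelian this forces $u^*\A_0u=\A$, contradicting the fact that singular and Cartan MASAs are never conjugate. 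What \emph{is} classical is that same-distribution self-adjoints in a ${\rm II}_1$ factor are \emph{approximately} unitarily equivalent, which is not enough for your exact equation $a^*Ra=b^*Sb$.

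The good news is that the MASA detour is unnecessary. First shrink $Q_\pm$ so that $Q_-\le\chi_X(-\infty,-\epsilon]$ and $Q_+\le\chi_X[\epsilon,\infty)$ for some small $\epsilon>0$; then $R,S\ge\epsilon\cdot 1_\B$ are invertible in $\B$. Now simply set
\[
a=\bigl(1_\B+R^{1/2}S^{-1}R^{1/2}\bigr)^{-1/2},\qquad b=S^{-1/2}R^{1/2}a.
\]
Since $a$ is a function of $1_\B+R^{1/2}S^{-1}R^{1/2}$ it commutes with it, so $a^*a+b^*b=a\bigl(1_\B+R^{1/2}S^{-1}R^{1/2}\bigr)a=1_\B$, while $b^*Sb=aR^{1/2}S^{-1/2}SS^{-1/2}R^{1/2}a=aRa=a^*Ra$. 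Thus $EXE=0$ with $E=\xi\xi^*$, $\xi=(a,b)^T$, exactly as you wanted. With this correction your argument is complete and in fact more uniform than the paper's: it handles all semifinite factors at once, avoiding both the type-by-type case analysis and the passage through the unbounded operator $H^{-1/2}$ that the paper uses.
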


To prove Lemma~\ref{L:lemma 2.1}, we need the following two lemmas.

\begin{lem}\label{L:Lemma 2.2}
Let $\M$ be a von Neumann algebra and let $X\in \M$. Then there is a nonzero projection $E\in \M$ such that $EXE=0$ if and only if there is a nonzero element $Y \in \M$ such that $YXY^*=0$.
\end{lem}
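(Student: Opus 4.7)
The forward implication is immediate: if $EXE=0$ for a nonzero projection $E$, then $Y:=E$ is a nonzero element of $\M$ with $YXY^*=EXE^*=EXE=0$, since $E=E^*$.

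For the reverse implication, suppose $Y\in\M$ is nonzero with $YXY^*=0$. My plan is to pass to $|Y|$ via the polar decomposition, and then pick off a spectral projection of $|Y|$ on which $|Y|$ is invertible. Write $Y=V|Y|$ with $V$ the partial isometry in $\M$ whose initial projection $V^*V$ equals the support projection of $|Y|$, so in particular $V^*V|Y|=|Y|$ and $|Y|V^*V=|Y|$. Then $Y^*=|Y|V^*$, so the hypothesis reads
\[
0=YXY^*=V\,|Y|X|Y|\,V^*.
\]
Multiplying on the left by $V^*$ and on the right by $V$ and using $V^*V|Y|=|Y|=|Y|V^*V$, we obtain $|Y|X|Y|=0$.

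Next, since $Y\neq 0$, we have $|Y|\neq 0$, so for all sufficiently small $\eps>0$ the spectral projection
\[
E:=\chi_{(\eps,\|Y\|]}(|Y|)\in\M
\]
is nonzero. On the range of $E$, the operator $|Y|$ is bounded below by $\eps$; thus if we set $g(t):=t^{-1}$ for $t>\eps$ and $g(t):=0$ otherwise, then $g(|Y|)\in\M$ and $g(|Y|)\,|Y|=|Y|\,g(|Y|)=E$. Sandwiching $|Y|X|Y|=0$ between $g(|Y|)$'s on the two sides yields
\[
EXE=g(|Y|)\,|Y|X|Y|\,g(|Y|)=0,
\]
as required. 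There is no real obstacle here; the only thing to keep straight is that one must cut with a spectral projection of $|Y|$ bounded away from $0$ (rather than, say, the support projection of $|Y|$ itself, for which an inverse need not exist) in order to invert $|Y|$ and deduce $EXE=0$ from $|Y|X|Y|=0$.
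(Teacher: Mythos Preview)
Your proof is correct and follows the same overall strategy as the paper: use the polar decomposition $Y=V|Y|$ to reduce $YXY^*=0$ to $|Y|X|Y|=0$, then extract a projection from $|Y|$. The one noteworthy difference is in the final step. You cut down to a spectral projection $E=\chi_{(\eps,\|Y\|]}(|Y|)$ so that $|Y|$ has a bounded inverse on $E$, and your closing remark explicitly warns against using the full support projection of $|Y|$. The paper, however, does exactly that: it takes $E=I-\chi_{\{0\}}(|Y|)$, writes everything in $2\times 2$ block form, and argues that since $H_1=E|Y|E$ is injective with dense range on $E\H$, the relation $H_1(EXE)H_1=0$ forces $EXE=0$ directly---no bounded inverse is needed. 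So your caution is not strictly necessary, and the paper's route yields the slightly stronger conclusion that the \emph{entire} support projection of $|Y|$ is isotropic for $X$, whereas your argument only produces a (possibly smaller) spectral piece. For the purposes of the lemma as stated, of course, either suffices.
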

\begin{proof}
Suppose $Y\neq 0$ satisfies $YXY^*=0$. By the polar decomposition theorem, there is a nonzero positive operator $H=(Y^*Y)^{1/2}\in\M$ and a partial isometry $V$ in $\M$ such that $Y=VH$ and $V^*VH=H=HV^*V$. Then $YXY^*=0$ implies that
\[
VHXHV^*=0
\]
and therefore,
\[
HXH=V^*VHXHV^*V=0.
\]
 Let $F=\chi_{H}(\{0\})$ and $E=I-F$. Then $E\neq 0$ and
\[
H=\begin{pmatrix}
H_{1}&0\\
0&0
\end{pmatrix}
\]
with respect to the decomposition $I=E+F$, where $H_{1}=EHE$ is an injective positive map with dense range.
Write
\[
X=\begin{pmatrix}
X_{11}&X_{12}\\
X_{21}&X_{22}
\end{pmatrix}
\]
with respect to the decomposition $I=E+F$. Then
\[
0=HXH=\begin{pmatrix}
H_{1}&0\\
0&0
\end{pmatrix}
\begin{pmatrix}
X_{11}&X_{12}\\
X_{21}&X_{22}
\end{pmatrix}
\begin{pmatrix}
H_{1}&0\\
0&0
\end{pmatrix}=\begin{pmatrix}
H_{1}X_{11}H_1&0\\
0&0
\end{pmatrix}
\]
 implies that $H_1X_{11}H_1=0$. Since $H_1$ is injective with dense range, $X_{11}=0$. This implies that $EXE=0$.

\end{proof}

\begin{lem}\label{L:lemma 2.3}
Let $(\M,\tau)$ be a  factor and let $U=U^*\in \M$ be a unitary operator such that $U\neq I,-I$. Then there is a nonzero projection $E$ such that $EUE=0$.
\end{lem}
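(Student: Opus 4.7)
The plan is to exploit the observation that a self-adjoint unitary $U$ must satisfy $U^{2}=I$, so its spectrum lies in $\{-1,1\}$ and we have an orthogonal decomposition
\[
U=P-Q,\qquad P:=\chi_{U}(\{1\}),\quad Q:=\chi_{U}(\{-1\}),\qquad P+Q=I.
\]
The hypotheses $U\neq I$ and $U\neq -I$ translate precisely into $P\neq 0$ and $Q\neq 0$. In this formulation, an isotropic projection $E$ is exactly a projection for which $EPE=EQE$: a subspace on which the two spectral projections of $U$ contribute equally.

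Next, I would use that $\M$ is a factor to produce mutually equivalent nonzero subprojections $P_{1}\le P$ and $Q_{1}\le Q$. Concretely, by comparability of projections in a factor, there exists a partial isometry $V\in\M$ with
\[
V^{*}V=P_{1}\le P,\qquad VV^{*}=Q_{1}\le Q,\qquad P_{1},Q_{1}\neq 0.
\]
Thinking of $V$ as an identification of a piece of the $+1$-eigenspace with a piece of the $-1$-eigenspace suggests taking $E$ to be the projection onto the ``graph'' subspace $\{\xi+V\xi:\xi\in P_{1}\H\}$. A short computation with $W\xi=\tfrac{1}{\sqrt{2}}(\xi+V\xi)$ gives the explicit formula
\[
E\;=\;\tfrac{1}{2}\bigl(P_{1}+Q_{1}+V+V^{*}\bigr),
\]
which manifestly lies in $\M$.

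The verification then splits into two routine pieces. First, using $V^{*}V=P_{1}$, $VV^{*}=Q_{1}$, $P_{1}Q_{1}=0$, $P_{1}V=0$, $Q_{1}V^{*}=0$, $V^{2}=(V^{*})^{2}=0$, one expands $E^{2}$ and checks it equals $E$; also $E\ne 0$ since $P_{1}\ne 0$. Second, one computes
\[
P(P_{1}+Q_{1}+V+V^{*})\;=\;P_{1}+V^{*},
\]
and then $(P_{1}+Q_{1}+V+V^{*})(P_{1}+V^{*})=P_{1}+Q_{1}+V+V^{*}$, giving $EPE=\tfrac{1}{2}E$; by symmetry $EQE=\tfrac{1}{2}E$, whence
\[
EUE\;=\;EPE-EQE\;=\;0.
\]

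The only genuine step, as opposed to bookkeeping, is the existence of the partial isometry $V$, which is standard factor theory (Murray--von Neumann comparability), and the insight that a graph-type subspace yields the desired isotropy. I expect no serious obstacle here; the main care needed is the somewhat tedious algebra of $V$, $V^{*}$, $P_{1}$, $Q_{1}$, which is streamlined by first writing $E=WW^{*}$ and verifying $W^{*}W=P_{1}$ so that $E$ is automatically a projection before checking $EUE=0$.
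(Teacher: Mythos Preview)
Your proof is correct and is essentially the same as the paper's: both decompose $U=P-Q$ via its spectral projections, use comparability in a factor to find equivalent nonzero subprojections $P_{1}\le P$, $Q_{1}\le Q$, and take $E$ to be the ``graph'' projection, which the paper writes as the $2\times 2$ matrix $\begin{pmatrix}1/2&1/2\\1/2&1/2\end{pmatrix}$ under the identification $(P_{1}+Q_{1})\M(P_{1}+Q_{1})\cong M_{2}(\CCC)\otimes P_{1}\M P_{1}$ and you write explicitly as $\tfrac{1}{2}(P_{1}+Q_{1}+V+V^{*})$. The only cosmetic difference is that the paper carries out the verification $EUE=0$ inside $M_{2}(\CCC)$ while you expand directly in terms of $V,V^{*},P_{1},Q_{1}$.
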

\begin{proof}
Let $F_1=\chi_{U}(\{1\})$ and $F_2=\chi_{U}(\{-1\})$. Then $F_1,F_2\neq 0$, $F_1+F_2=I$ and $U=F_1-F_2$. Without loss of generality, we assume $F_1\prec F_2$. Then there is a subprojection $F$ of $F_2$ such that $F\sim F_1$. Let $V=F_1-F$. Note that $(F+F_1)\M (F+F_1)\cong M_2(\CCC)\otimes F\M F$ and a simple computation shows that
\[
\begin{pmatrix}
\frac{1}{2}&\frac{1}{2}\\
\frac{1}{2}&\frac{1}{2}
\end{pmatrix}
\begin{pmatrix}
1&0\\
0&-1
\end{pmatrix}
\begin{pmatrix}
\frac{1}{2}&\frac{1}{2}\\
\frac{1}{2}&\frac{1}{2}
\end{pmatrix}
=0.
\]
So there is a nonzero subprojection $E$ of $F_1+F$ such that $EVE=0$. Now
\[
EUE=E(F_1+F)U(F_1+F)E=EVE=0.
\]
\end{proof}

\noindent \emph{Proof of Lemma~\ref{L:lemma 2.1}.}
If $\chi_{X}(\{0\})\neq 0$, then let $E=\chi_{X}(\{0\})$. We have $EXE=0$. So we may assume that $\chi_{X}(\{0\})=0$, i.e., $X$ is injective with dense range. By the polar decomposition theorem, there is a unitary operator $U=U^*$ and an injective positive operator $H$ such that $X=UH$. Since $\tau(X)=0$, $U\neq I,-I$.

Assume that $\M$ is a finite factor. By Lemma~\ref{L:lemma 2.3}, there is a nonzero projection $F$ such that $FUF=0$. Now $FH^{-1/2}UHH^{-1/2}F=0$ and $FH^{-1/2}\neq 0$. Let $Z=FH^{-1/2}$. Then $Z$ is a closed densely defined operator affiliated with $\M$ and $Z^*=H^{-1/2}F$. Let $Y=(I+ZZ^*)^{-1/2}Z$. Then $Y$ is a bounded operator in $\M$ and $Y\neq 0$. Note that $Y^*=Z^*(I+ZZ^*)^{-1/2}$ and $YXY^*=0$. By Lemma~\ref{L:Lemma 2.2}, there is a nonzero projection $E\in \M$ such that $EXE=0$.

Assume that $\M=\B(\H)$ and $\H$ is infinite dimensional. Since $X$ is a trace class operator, $X$ is diagonalizable. Since ${\rm Tr}(X)=0$, we may assume that $X=\sum_n\lambda_n e_n\otimes e_n$ and $\lambda_1>0,\lambda_2<0$. Let
\[
Y=\begin{pmatrix}
\frac{1}{2}&\frac{1}{2}\\
\frac{1}{2}&\frac{1}{2}
\end{pmatrix}
\begin{pmatrix}
\lambda_1^{-1/2}&0\\
0&(-\lambda_2)^{-1/2}
\end{pmatrix}\in \L(\CCC e_1\oplus \CCC e_2).
\]
Then a simple calculation shows that $Y(\lambda_1 e_1\otimes e_1+\lambda_2 e_2\otimes e_2)Y^*=0$. Now $(Y\oplus 0)X(Y^*\oplus 0)=0$. By Lemma~\ref{L:Lemma 2.2}, there is a nonzero projection $E$ such that $EXE=0$.

Assume that $\M$ is type ${\rm II}_\infty$.  Since $X\in \M\cap L^1(\M,\tau)$, $\chi_{U}(\{1\})H,-\chi_{U}(\{-1\})H\in \M\cap L^1(M,\tau)$. Thus
 there are nonzero finite projections $E_1\leq \chi_{U}(\{1\})$ and $E_2\leq \chi_{U}(\{-1\})$ such that $E_1X=XE_1$ and $E_2X=XE_2$.
 Let $\tilde{X}=(E_1+E_2)U(E_1+E_2)H=\tilde{U}\tilde{H}$. Then $(E_1+E_2)\M (E_1+E_2)$ is a type ${\rm II}_1$ factor and $\tilde{X}\in (E_1+E_2)\M (E_1+E_2)$.
 By Lemma~\ref{L:lemma 2.3}, there is a nonzero projection $F$ such that $F\tilde{U}F=0$. Now
 \[
 F\tilde{H}^{-1/2}\tilde{U}\tilde{H}\tilde{H}^{-1/2}F=0.
 \]
 Let $Z=F\tilde{H}^{-1/2}$. Then $Z$ is a closed densely defined operator affiliated with $(E_1+E_2)\M (E_1+E_2)$ and $Z^*=\tilde{H}^{-1/2}F$. Let $Y=(I+ZZ^*)^{-1/2}Z$. Then $Y$ is a bounded operator in $(E_1+E_2)\M (E_1+E_2)$ and $Y\neq 0$. Since $Y^*=Z^*(I+ZZ^*)^{-1/2}$,
 $Y((E_1+E_2)UH)Y^*=Y\tilde{X}Y^*=0$. Now $(Y\oplus 0)X(Y^*\oplus 0)=0$. By Lemma~\ref{L:Lemma 2.2}, there is a nonzero projection $E$ such that $EXE=0$.

\begin{thm}\label{T:theorem 2.4}
Let $\M$ be a semifinite factor with a faithful normal trace $\tau$ and let $X=X^*\in \M\cap L^1(\M,\tau)$ such that $\tau(X)=0$. Then there is a family of nonzero projections $\{E_\alpha\}$ in $\M$ such that $\sum_\alpha E_\alpha=I$ and $E_\alpha XE_\alpha=0$ for all $\alpha$.
\end{thm}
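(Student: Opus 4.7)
The plan is to apply Zorn's lemma to the collection $\mathfrak{F}$ of sets of mutually orthogonal nonzero projections $\{E_\alpha\}\subset \M$ with $E_\alpha X E_\alpha=0$ for every $\alpha$, ordered by inclusion. This collection contains the empty set, and any chain has the union of its members as an upper bound, so there is a maximal family $\{E_\alpha\}$. It suffices to show that $P:=I-\sum_\alpha E_\alpha = 0$.

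Suppose for contradiction that $P\neq 0$, and set $Q=I-P=\sum_\alpha E_\alpha$. The key step is the identity $\tau(PXP)=0$. Expanding $X=PXP+PXQ+QXP+QXQ$, the two cross terms vanish in trace by cyclicity ($\tau(PXQ)=\tau(QPX)=0$ and similarly $\tau(QXP)=0$). For the remaining term, decompose $X=X_+-X_-$ with $X_\pm\in L^1(\M,\tau)^+$; normality of $\tau$ together with the SOT identity $Q=\sum_\alpha E_\alpha$ gives $\tau(QX_\pm Q)=\tau(X_\pm^{1/2}QX_\pm^{1/2})=\sum_\alpha \tau(E_\alpha X_\pm E_\alpha)$. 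Since $E_\alpha X E_\alpha=0$ forces $\tau(E_\alpha X_+E_\alpha)=\tau(E_\alpha X_-E_\alpha)$ for each $\alpha$, the two nonnegative series agree term by term and $\tau(QXQ)=0$. Combined with the hypothesis $\tau(X)=0$, this yields $\tau(PXP)=0$.

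Now work inside the corner $P\M P$, which is itself a semifinite factor whose restricted trace $\tau|_{P\M P}$ is faithful, normal, and semifinite. The operator $PXP$ is self-adjoint, lies in $L^1(P\M P,\tau|_{P\M P})$ (since $\|PXP\|_1\leq \|X\|_1$), and has trace zero, so Lemma~\ref{L:lemma 2.1} applied inside $P\M P$ produces a nonzero projection $E\leq P$ with $E(PXP)E=EXE=0$. Because $E\leq P$ is orthogonal to every $E_\alpha\leq Q$, adjoining $E$ strictly enlarges the family and contradicts maximality of $\{E_\alpha\}$. The principal technical point is the trace identity $\tau(PXP)=0$, whose verification requires the $X=X_+-X_-$ splitting together with normality of $\tau$ to handle a potentially uncountable index set; once this is established the remaining Zorn's-lemma-plus-corner reduction to Lemma~\ref{L:lemma 2.1} is routine.
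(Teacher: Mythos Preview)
Your proof is correct and follows essentially the same approach as the paper: Zorn's lemma applied to families of mutually orthogonal isotropic projections, followed by a corner reduction to Lemma~\ref{L:lemma 2.1}, with the key step being the verification that $\tau(PXP)=0$. Your handling of this trace identity via the decomposition $X=X_+-X_-$ and normality of $\tau$ is slightly more explicit than the paper's (which instead splits into the cases $\tau(P)<\infty$ and $\tau(P)=\infty$ before computing $\tau(XP)=\tau(X)-\sum_\alpha\tau(E_\alpha X E_\alpha)$), but the underlying argument is the same.
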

\begin{proof}
Let $\E=\{\{E_\alpha\}\}$ be the family of mutually orthogonal nonzero projections such that $E_\alpha XE_\alpha=0$ for all $\alpha$. By Lemma~\ref{L:lemma 2.1}, $\E$ is not empty. Define $\{E_\alpha\}\prec \{F_\beta\}$ if $\{E_\alpha\}\subseteq \{F_\beta\}$. Clearly, $\prec$ is a partial order and if $\{\{E_\alpha\}\}$ is a totally ordered set, then the union is a maximal element of the totally ordered set. By Zorn's lemma, there is a maximal element $\{E_\alpha\}$ in $\E$. Claim $\sum_\alpha E_\alpha=I$. Otherwise, let $F=I-\sum_{\alpha}E_\alpha$.
Then $FXF\in \M_F=F\M F$. If $\tau(F)<\infty$, then
\[
\tau_{\M_F}(FXF)=\frac{\tau(FXF)}{\tau(F)}=\frac{\tau(XF)}{\tau(F)}=\frac{\tau(X)-\tau(X\sum_\alpha E_\alpha)}{\tau(F)}=\frac{\tau(X)-\sum_\alpha\tau(E_\alpha XE_\alpha)}{\tau(F)}=0.
\]
By Lemma~\ref{L:lemma 2.1}, there is a nonzero projection $E\leq F$ such that $EFXFE=0$. Thus $EXE=0$ and $\{E_\alpha\}\cup \{E\}\in \E$. This is a contradiction.
If $\tau(F)=\infty$, then
\[
\tau(FXF)=\tau(XF)=\tau(X)-\tau(X\sum_\alpha E_\alpha)=\tau(X)-\sum_\alpha\tau(E_\alpha X E_\alpha)=0.
\]
By Lemma~\ref{L:lemma 2.1}, there is a nonzero projection $E\leq F$ such that $EFXFE=0$. Thus $EXE=0$ and $\{E_\alpha\}\cup \{E\}\in \E$. This is a contradiction.
\end{proof}

\begin{cor}\label{C:Corollary 2.5}
Let $\M$ be a countably decomposable semifinite factor with a faithful normal trace $\tau$ and let $X=X^*\in \M\cap L^1(\M,\tau)$ such that $\tau(X)=0$, e.g., $\M$ be type ${\rm II}_1$. Then there is a family of countably many nonzero projections $\{E_n\}$ in $\M$ such that $\sum_n E_n=I$ and $E_n XE_n=0$ for all $n$.
\end{cor}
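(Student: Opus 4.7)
The plan is to apply Theorem~\ref{T:theorem 2.4} directly and then exploit the countable decomposability hypothesis to force the resulting family to be at most countable. First I would invoke Theorem~\ref{T:theorem 2.4} to obtain a family $\{E_\alpha\}$ of mutually orthogonal nonzero projections in $\M$ with $\sum_\alpha E_\alpha = I$ and $E_\alpha X E_\alpha = 0$ for every $\alpha$. Second, since $\M$ is countably decomposable by hypothesis, every family of mutually orthogonal nonzero projections in $\M$ is at most countable, so the index set of $\{E_\alpha\}$ is at most countable and can be re-enumerated as $\{E_n\}$.

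For the parenthetical assertion that type ${\rm II}_1$ factors fall under the hypothesis, I would note that in a type ${\rm II}_1$ factor the trace is finite and normalized by $\tau(I)=1$, so for any family $\{F_\beta\}$ of mutually orthogonal projections one has $\sum_\beta \tau(F_\beta) \leq \tau(I) = 1$, which forces all but countably many $F_\beta$ to have trace zero; faithfulness of $\tau$ then makes those vanish. Hence every type ${\rm II}_1$ factor is countably decomposable, and moreover every element $X=X^*\in \M$ automatically lies in $L^1(\M,\tau)$ so the hypothesis reduces to $\tau(X)=0$.

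There is essentially no obstacle here: the entire content of the corollary is the passage from an arbitrary-cardinality decomposition provided by Theorem~\ref{T:theorem 2.4} to a countable one, and this passage is completely formal once countable decomposability is in hand. The only thing to double-check is that Theorem~\ref{T:theorem 2.4} applies to the type ${\rm II}_1$ case, which it does because type ${\rm II}_1$ factors are semifinite with a faithful normal tracial state.
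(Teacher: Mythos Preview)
Your proposal is correct and matches the paper's approach: the paper does not give a separate proof of Corollary~\ref{C:Corollary 2.5} because it is immediate from Theorem~\ref{T:theorem 2.4} together with the definition of countable decomposability, exactly as you describe. Your additional remarks justifying the parenthetical ``e.g., $\M$ be type ${\rm II}_1$'' are also correct and spell out what the paper leaves implicit.
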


\section{Type ${\rm II}_1$ case}

The following lemma is Proposition 3.1 of~\cite{KNZ}. We rewrite it for our purpose and provide the complete proof for readers' convenience.
\begin{lem}\label{L:lemma 4.1}
Let $(\M,\tau)$ be a finite factor, $A\in \M^+$ and let $N\in\NNN\cup\{\infty\}$. Then the following conditions are equivalent.
\begin{enumerate}
\item There is a decomposition of the identity into $N$ mutually orthogonal nonzero projections $E_j$, $I=\sum_{j=1}^NE_j$, for which $\sum_{j=1}^N E_jAE_j=I$, the convergence of the series being in the strong operator topology if $N=\infty$.
\item $\tau(A)=1$ and $A$ is the sum of $N$ nonzero projections, the convergence of the series being in the strong operator topology if $N=\infty$.
\end{enumerate}
\end{lem}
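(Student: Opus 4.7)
My plan is to prove the two implications separately. The direction (1)$\Rightarrow$(2) is a direct computation: starting from $\sum_j E_j A E_j = I$, I would compress both sides by $E_k$ (using mutual orthogonality) to obtain $E_k A E_k = E_k$ for each $k$. Setting $P_j := A^{1/2} E_j A^{1/2}$, the calculation $P_j^2 = A^{1/2} E_j A E_j A^{1/2} = A^{1/2} E_j A^{1/2} = P_j$ shows each $P_j$ is a projection, and $\tau(P_j) = \tau(E_j) > 0$ ensures it is nonzero. Summing gives $\sum_j P_j = A^{1/2}(\sum_j E_j) A^{1/2} = A$, with SOT convergence inherited when $N=\infty$ from continuity of left/right multiplication by $A^{1/2}$, and tracing the relation $\sum_j E_j A E_j = I$ yields $\tau(A) = 1$ via cyclicity.

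For (2)$\Rightarrow$(1), the clean case is when $A$ is invertible; the heart of the argument is the algebraic identity $P_i A^{-1} P_j = \delta_{ij} P_j$. I would establish it by forming the column operator $V \in M_{N,1}(\M)$ with $j$-th entry $P_j$; then $V^* V = A$, so $V A^{-1/2}$ is an isometry and $V A^{-1} V^* = [P_i A^{-1} P_j]_{i,j}$ is its range projection, of trace $\tau(A^{-1} V^* V) = \tau(I) = 1$ by cyclicity in $M_N(\M)$. Since $\mathrm{Range}(V) \subseteq \bigoplus_j \mathrm{Range}(P_j)$, we have $V A^{-1} V^* \le \sum_j P_j \otimes e_{j,j}$, and this block-diagonal projection has the same trace $\sum_j \tau(P_j) = \tau(A) = 1$. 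Equality of traces together with the $\le$ relation between two projections forces them to agree, which yields the identity. Setting $E_j := A^{-1/2} P_j A^{-1/2}$, the identity gives $E_j^2 = A^{-1/2} P_j A^{-1} P_j A^{-1/2} = E_j$ and $E_i E_j = A^{-1/2} P_i A^{-1} P_j A^{-1/2} = 0$ for $i \neq j$; moreover $\sum_j E_j = A^{-1/2} A A^{-1/2} = I$ and $E_j A E_j = A^{-1/2} P_j(A^{-1/2} A A^{-1/2}) P_j A^{-1/2} = E_j$, producing exactly $N$ nonzero projections as required.

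The main obstacle is the non-invertible case, where $\mathrm{supp}(A) < I$ and $A^{-1/2}$ is not defined on all of $\H$, so the explicit formula no longer yields a decomposition of $I$. In this case I would invoke Theorem~\ref{T:theorem 2.4} applied to $X := I - A$, which is self-adjoint with $\tau(X) = \tau(I) - \tau(A) = 0$ and lies in $L^1(\M,\tau)$ since $\M$ is finite; this provides mutually orthogonal nonzero projections $\{F_\alpha\}$ summing to $I$ with $F_\alpha A F_\alpha = F_\alpha$. The subtle remaining point is matching the prescribed cardinality $N$. Here the key observation is that any sub-projection $F' \le F_\alpha$ automatically inherits the property $F' A F' = F' F_\alpha A F_\alpha F' = F'$, so the family can be freely refined upward; this handles $N=\infty$ and any finite $N$ at least the initial Zorn count. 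To control the initial count from below for finite $N$ smaller than what the Zorn argument would produce, I would pick $F_1, F_2, \ldots$ one at a time via Lemma~\ref{L:lemma 2.1} applied within the successive compressions $(I - F_1 - \cdots - F_k)\M(I - F_1 - \cdots - F_k)$, terminating once exactly $N$ pieces have been produced and then refining the last piece if needed to absorb the residual.
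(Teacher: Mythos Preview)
Your direction (1)$\Rightarrow$(2) is correct and is exactly the paper's argument. Your invertible case for (2)$\Rightarrow$(1) is also correct and, in fact, produces the very same projections $E_j=A^{-1/2}P_jA^{-1/2}$ that the paper obtains; you just justify the orthogonality via the elegant identity $P_iA^{-1}P_j=\delta_{ij}P_j$ rather than via the polar decomposition of $B=\sum_j W_j$.

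The genuine gap is in your treatment of the non-invertible case. First, a minor point: ``non-invertible'' does not mean $\mathrm{supp}(A)<I$; in an infinite-dimensional finite factor $A$ can have full support and still fail to be invertible, so your dichotomy is not exhaustive. More seriously, the appeal to Theorem~\ref{T:theorem 2.4} loses all control over the cardinality $N$, and your repair attempt does not work. Refinement is fine (if $F'\le F_\alpha$ then $F'AF'=F'F_\alpha AF_\alpha F'=F'$), so you can always \emph{increase} the count. But you cannot \emph{decrease} it: if $F=F_\alpha+F_\beta$ with $\alpha\neq\beta$, then $FAF=F_\alpha AF_\alpha+F_\beta AF_\beta+F_\alpha AF_\beta+F_\beta AF_\alpha=F+(\text{cross terms})$, and nothing forces $F_\alpha AF_\beta=0$. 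Likewise, in your ``pick one at a time and stop'' scheme, after choosing $F_1,\ldots,F_{N-1}$ the residual $G=I-\sum_{k<N}F_k$ satisfies only $\tau(GXG)=0$, not $GXG=0$, so it cannot serve as the $N$-th piece. Since the hypothesis fixes a \emph{specific} $N$ (namely the number of projections in the given decomposition $A=\sum_{j=1}^N P_j$), this is a real obstruction, already visible in type~$\mathrm{I}_n$ examples.

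The paper avoids this entirely. It builds $E_j$ directly from the given $P_j$: choose orthogonal $F_j\sim P_j$ with $\sum F_j=I$ (possible since $\sum\tau(P_j)=1$), pick $W_j$ with $W_j^*W_j=P_j$, $W_jW_j^*=F_j$, set $B=\sum W_j$ (the orthogonality of the $F_j$ makes the series SOT-Cauchy), and write $B=VA^{1/2}$ with $V$ unitary. Then $E_j:=V^*F_jV$ are automatically $N$ in number, mutually orthogonal, sum to $I$, and satisfy $E_jAE_j=E_j$. This works uniformly, with no invertibility assumption. If you want to salvage your own route, the natural fix is to push your column-operator argument through with $A^{-1/2}$ interpreted as a $\tau$-measurable operator affiliated with $\M$ (using $P_j\le A$ to see that $A^{-1/2}P_jA^{-1/2}$ remains a bounded projection); but as written, the non-invertible case is incomplete.
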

\begin{proof}
(1)$\Rightarrow$(2). $\sum_{j=1}^NE_jAE_j=I$ is equivalent to $E_jAE_j=E_j$ for each $1\leq j\leq N$. Let $P_j=A^{1/2}E_jA^{1/2}$ for $1\leq j\leq N$. Then $P_j$ is a projection for $1\leq j\leq N$ and $A=\sum_{j=1}^NP_j$. Note that $1=\tau(I)=\tau\left(\sum_{j=1}^N E_jAE_j\right)=\tau(A)$.

(2)$\Rightarrow$(1). Let $A=\sum_{j=1}^N P_j$ where $\{P_j\}$ are nonzero projections. Then $1=\tau(A)=\sum_{j=1}^N\tau(P_j)$. We decompose the identity $I=\sum_{j=1}^N F_j$ into $N$ mutually orthogonal projections $F_j\sim P_j$.  Now choose partial isometries $W_j$ with $P_j=W_j^*W_j$ and $F_j=W_jW_j^*$. If $N<\infty$, define $B=\sum_{j=1}^N W_j$. If $N=\infty$ and $n>m$, then
\[
\left(\sum_{j=m}^n W_j\right)^*\left(\sum_{j=m}^n W_j\right)=\sum_{i,j=m}^nW_i^*W_j=\sum_{j=m}^nW_j^*W_j=\sum_{j=m}^n P_j.
\]
Thus by the strong and hence the weak convergence of the series $\sum_{j=1}^\infty P_j$, we see that the series $\sum_{j=1}^\infty W_j$ is strongly Cauchy and hence converges in the strong operator topology. Call its sum $B$. Then $B^*B=\sum_{j=1}^NP_j=A$. Let $B=VA^{1/2}$ such that $V^*V=VV^*=I$. Then $BB^*=VAV^*$.  Moreover, $F_jB=W_j$ for every $j$, thus
\[
\sum_{j=1}^NF_jVAV^*F_j=\sum_{j=1}^NF_jBB^*F_j=\sum_{j=1}^NW_jW_j^*=\sum_{j=1}^NF_j=I.
\]
Let $E_j=V^*F_jV$. Then we have (1).
\end{proof}

\noindent{\bf Remark:}\, Note that in the above lemma, if ``$\tau(A)=1$" in (2) is changed to ``$\tau(A)=\tau(I)$", then (1)$\Rightarrow$(2) is true for arbitrary semifinite von Neumann algebras.

\begin{lem}\label{L:lemma 4.3}
Let $(\M,\tau)$ be a type ${\rm II}_1$ factor and let $A\in \M^+$ such that $\tau(A)=1$. Then $A$ is a strong sum of projections.
\end{lem}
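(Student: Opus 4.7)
The plan is to reduce this directly to the machinery already built: Corollary~\ref{C:Corollary 2.5} applied to the self-adjoint trace-zero operator $X = A - I$ will produce an orthogonal decomposition of the identity along which $A$ acts like a projection block-by-block, and then Lemma~\ref{L:lemma 4.1} converts that decomposition into the desired sum-of-projections expression for $A$.

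Concretely, first I would set $X = A - I$. Since $\M$ is type ${\rm II}_1$ and hence finite with $\tau(I)=1$, certainly $X = X^* \in \M \cap L^{1}(\M,\tau)$, and the hypothesis $\tau(A)=1$ gives $\tau(X) = \tau(A) - \tau(I) = 0$. Because a type ${\rm II}_1$ factor is countably decomposable, Corollary~\ref{C:Corollary 2.5} applies and yields a countable family of nonzero mutually orthogonal projections $\{E_n\}$ in $\M$ with $\sum_n E_n = I$ (strong convergence) and $E_n X E_n = 0$ for every $n$. The latter is exactly $E_n A E_n = E_n$ for every $n$, so summing gives $\sum_n E_n A E_n = \sum_n E_n = I$, which is condition~(1) of Lemma~\ref{L:lemma 4.1} with $N$ equal to the (finite or countably infinite) cardinality of the family.

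Having established (1) of Lemma~\ref{L:lemma 4.1}, the implication (1)$\Rightarrow$(2) of that lemma immediately gives $A = \sum_{j} P_j$ where each $P_j = A^{1/2} E_j A^{1/2}$ is a nonzero projection, with convergence in the strong operator topology when the index set is infinite. Thus $A$ is a strong sum of projections, as required.

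There is essentially no obstacle here beyond checking the trace-class hypothesis needed to invoke Corollary~\ref{C:Corollary 2.5}; this is automatic in the type ${\rm II}_1$ setting because $\tau(I) < \infty$. The real content has already been packaged into Section~2 (the isotropic decomposition for trace-zero self-adjoint operators) and into Lemma~\ref{L:lemma 4.1} (the bridge between block-diagonal ``$E_j A E_j = E_j$'' data and the projection-sum representation), so the lemma itself reduces to a one-step assembly of these tools. The only mild subtlety is ensuring the projections $E_n$ are nonzero and exhaust the identity, which is exactly what Corollary~\ref{C:Corollary 2.5} guarantees; if the family happens to be finite we land in the finite case of Lemma~\ref{L:lemma 4.1}, and otherwise we use the strong-operator convergence explicitly.
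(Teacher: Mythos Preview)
Your proof is correct and follows essentially the same approach as the paper: set $X=A-I$, note $\tau(X)=0$, apply Corollary~\ref{C:Corollary 2.5} to get mutually orthogonal projections $\{E_n\}$ summing to $I$ with $E_nXE_n=0$ (hence $E_nAE_n=E_n$), and then invoke Lemma~\ref{L:lemma 4.1} to conclude. Your version spells out a few verifications (trace-class, countable decomposability, the explicit formula $P_j=A^{1/2}E_jA^{1/2}$) that the paper leaves implicit, but the argument is the same.
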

\begin{proof}
Let $X=A-I$. Then $X=X^*$ and $\tau(X)=0$. By Corollary~\ref{C:Corollary 2.5}, there exists a sequences of mutually orthogonal nonzero projections
$\E=\{\{E_n\}_{n=1}^N\}$ such that $\sum_{n=1}^N E_n=I$ and $E_nXE_n=0$ for all $1\leq n\leq N$. Then $E_nAE_n=E_n$ for all $1\leq n\leq N$.
By Lemma~\ref{L:lemma 4.1}, we prove lemma~\ref{L:lemma 4.3}.
\end{proof}

\begin{thm}\label{T:theorem 2.7}
Let $\M$ be a type ${\rm II}_1$ factor and let $\tau$ be the faithful positive normal trace normalized by $\tau(I)=1$. Given $A\in \M^+$, we denote by $A_+=(A-I)\chi_A(1,\|A\|]$ the excess part of A and by $A_-=(I-A)\chi_A(0,1)$
the defect part of $A$. If $A\in \M^+$ and $\tau(A_+)\geq \tau(A_-)$, then $A$ can be written as the sum of a finite or infinite collection of projections.
\end{thm}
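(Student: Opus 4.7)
The plan is to combine Corollary~\ref{C:Corollary 2.5} with the already-known diagonalisable case of~\cite{KNZ} (quoted in the introduction), reducing the general (possibly non-diagonalisable) $A$ to the simplest diagonalisable operators, namely the scalar multiples $cI$.

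First I would pass to the injective case. Let $E:=\chi_A(\{0\})$; if $E=I$ then $A=0$ is trivial, so assume $E\neq I$ and work inside the corner $\N:=(I-E)\M(I-E)$, itself a type ${\rm II}_1$ factor in which $A$ is injective. Because the spectral projections $\chi_A(0,1)$ and $\chi_A(1,\|A\|]$ are orthogonal to $E$, the operators $A_+,A_-$ (and their traces) are unchanged by the restriction, and every projection of $\N$ is a projection of $\M$. Renormalize the trace on $\N$ so that $\tilde\tau(I_\N)=1$. Since $A$ is injective in $\N$, a short spectral computation yields $\tilde\tau(A_+)-\tilde\tau(A_-)=\tilde\tau(A)-1$, so the hypothesis $\tau(A_+)\ge\tau(A_-)$ translates into
\[
c:=\tilde\tau(A)\ge 1.
\]

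Next, apply Corollary~\ref{C:Corollary 2.5} to the self-adjoint operator $X:=A-cI_\N\in\N\cap L^1(\N,\tilde\tau)$, which satisfies $\tilde\tau(X)=0$: it produces a countable family $\{E_j\}$ of mutually orthogonal nonzero projections of $\N$ with $\sum_j E_j=I_\N$ (SOT) and $E_jAE_j=cE_j$ for every $j$. Setting $P_j:=A^{1/2}E_jA^{1/2}\in\N^+$ and mimicking the computation from the proof of Lemma~\ref{L:lemma 4.1},
\[
P_j^2 \;=\; A^{1/2}E_jAE_jA^{1/2} \;=\; cA^{1/2}E_jA^{1/2} \;=\; cP_j,
\]
so $Q_j:=c^{-1}P_j$ is a projection of $\N$, while $\sum_j P_j=A^{1/2}(\sum_j E_j)A^{1/2}=A$; hence $A=\sum_j cQ_j$ in SOT. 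When $c=1$ the $Q_j$'s themselves are the desired projections (which recovers Lemma~\ref{L:lemma 4.3}); otherwise $c>1$ and each summand $cQ_j$ still has to be broken up. Inside the corner $\N_j:=Q_j\N Q_j$, again a type ${\rm II}_1$ factor (with normalized trace $\tilde\tau_j:=\tilde\tau/\tilde\tau(Q_j)$), the operator $cQ_j$ is simply the scalar $c\cdot I_{\N_j}$, trivially diagonalisable with $(cI_{\N_j})_+=(c-1)I_{\N_j}$ and $(cI_{\N_j})_-=0$. The hypothesis $c\ge 1$ is exactly the condition $\tilde\tau_j((cI_{\N_j})_+)\ge\tilde\tau_j((cI_{\N_j})_-)$ required by the diagonalisable case of~\cite{KNZ}, which therefore supplies projections $\{R_{j,\ell}\}_\ell\subset\N_j$ with $\sum_\ell R_{j,\ell}=cQ_j$. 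Collecting everything,
\[
A \;=\; \sum_j cQ_j \;=\; \sum_{j,\ell} R_{j,\ell},
\]
the desired expression of $A$ as a strong sum of projections in $\M$.

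The main obstacle, and the reason why the new tool of Section~2 is indispensable, is engineering the identity $E_jAE_j=cE_j$ of the second step: this is precisely what is needed to force $P_j^2=cP_j$ and so turn each $Q_j$ into an honest projection. Without the isotropic-subspace decomposition of Corollary~\ref{C:Corollary 2.5} there is no obvious way to produce a ``$c$-diagonal'' decomposition $\sum_j E_j=I_\N$ adapted to a general (non-diagonalisable) positive operator $A$; once it is in hand, the rest of the proof is simply a matter of invoking the already known diagonalisable case of~\cite{KNZ} on the trivial operators $cI_{\N_j}$.
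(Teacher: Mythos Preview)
Your proof is correct and essentially the same as the paper's. The paper first scales $A$ to $B=A/s$ with $s=\tau(A)$, applies Corollary~\ref{C:Corollary 2.5} to $B-I$ (packaged as Lemma~\ref{L:lemma 4.3}) to write $B=\sum P_k$, and then decomposes each $sP_k$ via Theorem~5.2 of~\cite{KNZ}; you apply Corollary~\ref{C:Corollary 2.5} directly to $A-cI$ to obtain $A=\sum cQ_j$ and then invoke the diagonalisable case of~\cite{KNZ} on each $cQ_j$ --- since the isotropic projections for $B-I$ and for $A-cI$ coincide and the resulting $P_k$, $Q_j$ are identical, the two arguments differ only in bookkeeping.
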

\begin{proof}
By considering $R_A\M R_A$, we may assume that $R_A=I$. Since $A=A_+-A_-+R_A$ and $\tau(A_+)\geq \tau(A_-)$, $\tau(A)\geq \tau(I)=1$. Write $s=\tau(A)$ and $B=\frac{A}{s}$. Then $B\geq 0$ and $\tau(B)=1$. By Lemma~\ref{L:lemma 4.3}, $B=P_1+P_2+\cdots+P_N$ and $A=sP_1+sP_2+\cdots+sP_N$. By Lemma 5.1 or Theorem 5.2 of~\cite{KNZ}, each $sP_k$ is a strong sum of projections in $\M$. Thus $A$ is a strong sum of projections.
\end{proof}

\section{Type ${\rm II}_\infty$ case}

\begin{lem}\label{L:lemma 3.1}
Let $(\M,\tau)$ be a type ${\rm II}_1$ factor and $A\in \M^+$. Let $\A$ be a separable diffuse abelian von Neumann subalgebra of $\M$ containing $A$. Then $\{\tau(AE):\,E \,\text{is a projection of}\, \A\}=[0,\tau(A)]$.
\end{lem}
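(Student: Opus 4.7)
The plan is to reduce the claim to the classical measure-theoretic fact that an absolutely continuous finite measure on a non-atomic probability space hits every value in its range. The reduction proceeds via the structure theorem for separable abelian von Neumann algebras: because $\A$ is separable, diffuse, and abelian, and because $\tau|_\A$ is a faithful normal state (since $\tau(I)=1$), there is a trace-preserving $*$-isomorphism
\[
\pi:(\A,\tau|_\A)\longrightarrow (L^\infty([0,1],m),\textstyle\int\cdot\,dm),
\]
where $m$ denotes Lebesgue measure. Under $\pi$, the projections of $\A$ correspond precisely to characteristic functions $\chi_S$ of Borel measurable subsets $S\subseteq[0,1]$, and $A$ corresponds to some $f\in L^\infty([0,1],m)^+$ with $\tau(AE)=\int_S f\,dm$ whenever $E\leftrightarrow\chi_S$.

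With this identification the claim becomes: the set $\{\int_S f\,dm : S\subseteq[0,1]\text{ measurable}\}$ equals $[0,\int_{[0,1]}f\,dm]$. The inclusion $\subseteq$ is immediate, since $0\le f\chi_S\le f$. For the reverse inclusion I would define $\phi:[0,1]\to\mathbb{R}$ by $\phi(t)=\int_{[0,t]}f\,dm$. Because $f$ is integrable, $\phi$ is continuous (indeed absolutely continuous), with $\phi(0)=0$ and $\phi(1)=\int f\,dm=\tau(A)$. By the intermediate value theorem, $\phi$ attains every value in $[0,\tau(A)]$; taking $E$ to be the projection in $\A$ corresponding to $\chi_{[0,t]}$ at an appropriate $t$ then realises an arbitrarily prescribed value of $\tau(AE)$.

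There is essentially no serious obstacle: the only ingredient that requires a named tool is the trace-preserving identification $\A\cong L^\infty([0,1],m)$, which is the standard classification of separable abelian von Neumann algebras equipped with a faithful normal state. If one prefers to avoid invoking this identification explicitly, one can argue entirely inside $\M$: the diffuseness of $\A$ lets one construct an increasing continuous path $\{E_t\}_{t\in[0,1]}$ of projections in $\A$ with $E_0=0$, $E_1=I$, and $\tau(E_t)=t$ (for example by taking spectral projections of a self-adjoint generator of $\A$); then $t\mapsto\tau(AE_t)$ is continuous on $[0,1]$, takes the values $0$ and $\tau(A)$ at the endpoints, and the intermediate value theorem again yields the conclusion.
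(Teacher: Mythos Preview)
Your proposal is correct and follows essentially the same route as the paper: identify $(\A,\tau|_\A)$ with $(L^\infty[0,1],m)$, set $E_t\leftrightarrow\chi_{[0,t]}$, observe that $t\mapsto\tau(AE_t)$ is continuous with endpoint values $0$ and $\tau(A)$, and apply the intermediate value theorem. The only cosmetic difference is that the paper verifies continuity via the Lipschitz estimate $|\tau(A(E_s-E_t))|\le\|A\|\,|s-t|$, whereas you invoke absolute continuity of $\int_{[0,t]}f\,dm$; both are immediate.
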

\begin{proof}
$\{\tau(AE):\,E \,\text{is a projection of}\, \A\}\subseteq [0,\tau(A)]$ is clear. We need to show $\{\tau(AE):\,E \,\text{is a projection}\\\text{of}\, \A\}\supseteq[0,\tau(A)]$.
There is an isomorphism $\varphi$ from $(\A,\tau)$ onto $(L^\infty[0,1],m)$ such that $m\circ \varphi=\tau$. For $0\leq s\leq 1$, let $E_s=\varphi^{-1}(\chi_{[0,s]})$ and define
$f(s)=\tau(AE_s)$. Then $f(0)=0$ and $f(1)=\tau(A)$. Note that
\[
|f(s)-f(t)|=|\tau(A(E_s-E_t))|\leq \tau(|E_s-E_t|)\|A\|=\|A\||s-t|.
\]
This implies that $f(s)$ is a continuous function on $[0,1]$. Therefore, $\{\tau(AE):\,E \,\text{is a projection of}\, \A\}\supseteq[0,\tau(A)]$.
\end{proof}

\begin{lem}\label{L:lemma 3.2}
Let $(\M,\tau)$ be a type ${\rm II}_\infty$ factor and $A\in \M^+$ such that $\tau(A)<\infty$. Then there is an abelian von Neumann subalgebra $\A$  of $\M$ containing $A$ such that $\{\tau(AE):\,E \,\text{is a projection of}\, \A\, \text{with}\\ \tau(E)<\infty \,\text{and}\, E\leq R_A\}\supseteq [0,\tau(A))$.
\end{lem}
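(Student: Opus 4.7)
The plan is to exploit the fact that although $R_A$ may fail to be a finite projection, every spectral projection $\chi_A(\lambda,\|A\|]$ with $\lambda>0$ has finite trace (since $\lambda\tau(\chi_A(\lambda,\|A\|])\leq \tau(A)<\infty$). I will therefore build $\A$ by gluing together separable diffuse abelian subalgebras on a countable family of finite ``spectral annuli'' of $A$, and then apply Lemma~\ref{L:lemma 3.1} inside each annulus to realize the prescribed trace values.

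First I partition the support of $A$. Let $Q_1=\chi_A(1,\|A\|]$ and $Q_n=\chi_A(1/n,1/(n-1)]$ for $n\geq 2$. Each $Q_n$ is a (possibly zero) finite spectral projection of $A$, the $Q_n$ are pairwise orthogonal, and $\sum_{n\geq 1}Q_n=R_A$ in the strong operator topology. Since $\M$ is a type ${\rm II}_\infty$ factor and each nonzero $Q_n$ is a finite projection, every nonzero corner $Q_n\M Q_n$ is a type ${\rm II}_1$ factor, and $AQ_n=Q_nAQ_n\in (Q_n\M Q_n)^+$ because $Q_n$ commutes with $A$. Inside each nonzero $Q_n\M Q_n$ I pick a separable diffuse abelian von Neumann subalgebra $\A_n$ containing $AQ_n$; such an algebra exists because any atoms of $W^*(AQ_n)$, which cannot be abelian projections of the ambient ${\rm II}_1$ factor, may be split by further projections drawn from their own corners, and all such projections automatically commute with $AQ_n$. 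Set
\[
\A := \Big(\bigoplus_{n\geq 1}\A_n\Big)\oplus \CCC(I-R_A).
\]
Then $\A$ is a unital abelian von Neumann subalgebra of $\M$ and $A=\sum_n AQ_n\in \A$ by strong convergence.

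Finally, given $t\in[0,\tau(A))$, choose $N$ so large that $s_N:=\sum_{n=1}^{N}\tau(AQ_n)>t$, which is possible since $s_N\nearrow\tau(A)$. Split $t=t_1+\cdots+t_N$ greedily with $t_k\in[0,\tau(AQ_k)]$, by taking $t_1=\min(t,\tau(AQ_1))$, $t_2=\min(t-t_1,\tau(AQ_2))$, and so on. By Lemma~\ref{L:lemma 3.1} applied in the type ${\rm II}_1$ factor $Q_k\M Q_k$ (with its normalized trace) to $AQ_k$ and to the diffuse abelian subalgebra $\A_k$, there is a projection $F_k\in \A_k$ with $F_k\leq Q_k$ and $\tau(AF_k)=t_k$. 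Setting $F:=\sum_{k=1}^{N}F_k$ yields $F\in \A$ with $F\leq R_A$, $\tau(F)\leq \sum_{k=1}^{N}\tau(Q_k)<\infty$, and $\tau(AF)=\sum_{k=1}^{N}t_k=t$, as required.

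The main obstacle is really organizational: one must set up the spectral partition so as to be inside a ${\rm II}_1$ corner, and then organize the countably many diffuse abelian pieces into a single abelian subalgebra of $\M$ containing $A$; once this framework is in place the conclusion reduces to Lemma~\ref{L:lemma 3.1}. It is worth noting that the value $t=\tau(A)$ cannot in general be attained by an $F\leq R_A$ of finite trace, since $\tau(AF)=\tau(A)$ would force $F=R_A$ (by the faithfulness of $\tau$ on $R_A\M R_A$ together with the full support of $A$), and when $\tau(R_A)=\infty$ this violates $\tau(F)<\infty$; this is precisely why the conclusion gives the half-open interval $[0,\tau(A))$ rather than the closed one.
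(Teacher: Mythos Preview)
Your proof is correct and follows essentially the same approach as the paper: both arguments partition $R_A$ into mutually orthogonal finite spectral projections of $A$ (the paper uses $F_n=\chi_A[1/n,\infty)-\chi_A[1/(n-1),\infty)$, you use $Q_n=\chi_A(1/n,1/(n-1)]$), build a diffuse abelian subalgebra in each type ${\rm II}_1$ corner containing the corresponding compression of $A$, and invoke Lemma~\ref{L:lemma 3.1} piecewise. Your write-up is in fact more explicit than the paper's, which ends with ``Now the lemma is clear'' where you spell out the greedy allocation of $t$ across the blocks.
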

\begin{proof}
For every $n\in \NNN$, define $E_n=\chi_{A}[\frac{1}{n},+\infty)$. Then
\[
\frac{1}{n}\tau(E_n)\leq \int_{[\frac{1}{n}, +\infty)} \lambda d\tau(\chi_A[\lambda,+\infty))\leq \tau(A).
\]
Thus $\tau(E_n)\leq n\tau(A)<\infty$. Note that $E_1\leq E_2\leq E_3\leq \cdots$ and $\bigvee_n E_n=R_A$. Thus
\[
\tau(A)=\tau(AR_A)=\lim_n \tau(AE_n).
\]
Let $F_1=E_1$, $F_2=E_2-E_1$, $F_3=E_3-E_2$, $\cdots$. Then $\sum_{n=1}^\infty F_n=R_A$ and $F_n\M F_n$ is a type ${\rm II}_1$ factor. Choose a separable diffuse abelian von Neumann subalgebra  $\A_n$ of $F_n\M F_n$ containing $F_nAF_n$ and let $\A=\oplus \A_n\oplus \CCC(I-R_A)$. By Lemma~\ref{L:lemma 3.1},
\[\{\tau(AE):\,E \,\text{is a projection of}\, \A_n\}=[0,\tau(AF_n)].\] Now the lemma is clear.
\end{proof}

\begin{lem}\label{L:lemma 3.3}
Let $\M$ be a von Neumann algebra and $A\in \M^+$. If $E\in \M$ is a projection such that $EA=AE$, then $(AE)_+=A_+E$ and $(AE)_-=A_-E$.
\end{lem}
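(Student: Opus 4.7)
The plan is to realize the excess and defect parts as outputs of the Borel functional calculus and to exploit the fact that $EA=AE$ makes $E$ commute with every element of $W^*(A)$, together with a vanishing condition at zero.

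Concretely, introduce the bounded Borel functions $f,g\colon [0,\infty)\to \Real$ by
\[
f(\lambda)=(\lambda-1)\chi_{(1,\infty)}(\lambda),\qquad g(\lambda)=(1-\lambda)\chi_{(0,1)}(\lambda),
\]
both of which satisfy $f(0)=g(0)=0$. Directly from the definitions in the statement, $A_+=f(A)$ and $A_-=g(A)$. Moreover, since $EA=AE$ the element $E$ commutes with $A^{1/2}\in W^*(A)$, so $AE=A^{1/2}EA^{1/2}\in \M^+$, and the same two Borel functions give $(AE)_+=f(AE)$ and $(AE)_-=g(AE)$.

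Next I would use the commutation to cut the problem into two blocks. Because $EA=AE$, the projection $E$ commutes with every bounded Borel function of $A$, in particular with every spectral projection; hence the Hilbert space splits as $\H=E\H\oplus (I-E)\H$, with respect to which $A=A_1\oplus A_2$ for positive $A_1,A_2\in \M$, while $AE=A_1\oplus 0$. Applying the functional calculus block by block, for every bounded Borel function $h$ one gets
\[
h(AE)=h(A_1)\oplus h(0)I_{(I-E)\H}=h(A)E+h(0)(I-E).
\]

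Specializing this identity to $h=f$ and $h=g$, the condition $f(0)=g(0)=0$ kills the second term, so $(AE)_+=f(A)E=A_+E$ and $(AE)_-=g(A)E=A_-E$, which are the two claimed equalities. There is no real obstacle in the proof: the entire content is bookkeeping in the Borel functional calculus, and the only point one must not overlook is the vanishing of $f$ and $g$ at $\lambda=0$, which is precisely what ensures that the $(I-E)$-block contributes nothing to the spectral data of $AE$.
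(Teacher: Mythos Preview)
Your proof is correct, and at heart it rests on the same observation as the paper's: the excess and defect parts are Borel functions of $A$ that vanish at $0$, and for any such function $h$ one has $h(AE)=h(A)E$ whenever $E$ commutes with $A$. The execution differs, however. The paper proves the spectral-projection identity $\chi_{AE}[s,\infty)=\chi_A[s,\infty)E$ for $s>0$ by approximating $\chi_{[s,\infty)}$ pointwise by continuous functions $f_n$ with $f_n(0)=0$, using Stone--Weierstrass to get $f_n(AE)=f_n(A)E$ from the polynomial case, and then passing to the weak limit. You instead use the block decomposition $A=A_1\oplus A_2$, $AE=A_1\oplus 0$ to read off $h(AE)=h(A_1)\oplus h(0)I_{(I-E)\H}$ directly from the Borel functional calculus on direct sums. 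Your route is shorter and avoids the approximation step entirely; the paper's route is more hands-on and makes the role of the spectral projections explicit, which matches how they are used later in the paper.
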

\begin{proof}
$(AE)_+=(AE-I)\chi_{AE}(1,+\infty)$ and $(AE)_-=(I-AE)\chi_{AE}(0,1)$. Claim for $s>0$, $\chi_{AE}[s,+\infty)=\chi_A[s,+\infty) E$. It is easy to see that there exists a sequence of decreasing continuous functions $f_n(t)$, $f_n(0)=0$, and $\chi_{[s,+\infty)}(t)=\lim f_n(t)$ pointwise. By the Lebesgue Dominated theorem, $\forall \xi,\eta\in \H$,
\[
\langle \chi_A{[s,+\infty)}\xi,\eta\rangle=\int \chi_{[s,+\infty)}(t)d\langle \chi_A[t,+\infty) \xi,\eta\rangle=\lim \int f_n(t)d\langle \chi_A[t,+\infty) \xi,\eta\rangle=\lim\langle f_n(A) \xi,\eta\rangle.
\]
Similarly, $\chi_{AE}[s,+\infty)=\lim f_n(AE)$ in the weak operator topology. Note that for a polynomial $p(t)$ with $p(0)=0$, we have $p(AE)=p(A)E$. By the Stone-Weierstrass theorem, $f_n(AE)=f_n(A)E$ and thus $\chi_{AE}[s,+\infty)=\chi_A[s,+\infty)E$.  Note that
\[
\chi_{AE}(1,+\infty)=\lim \chi_{AE}\left[1+\frac{1}{n},+\infty\right)=\lim \chi_{A}\left[1+\frac{1}{n},+\infty\right)E=\chi_{A}(1,+\infty)E
\]
and
\[
\chi_{AE}(0,1)=\lim \chi_{AE}\left[\frac{1}{n},+\infty\right)-\chi_{AE}[1,+\infty)=\lim \chi_{A}\left[\frac{1}{n},+\infty\right)E-\chi_{A}[1,+\infty)E=\chi_{A}(0,1)E.
\]
Thus $(AE)_+=A_+E$ and $(AE)_-=A_-E$.
\end{proof}

\begin{thm}\label{T:theorem 4.4}
Let $(\M,\tau)$ be a type ${\rm II}_\infty$ factor and $A\in \M^+$. If $\tau(A_+)\geq \tau(A_-)$, then $A$ is a strong sum of projections.
\end{thm}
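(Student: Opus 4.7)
The plan is to reduce Theorem~\ref{T:theorem 4.4} to the type ${\rm II}_1$ result, Theorem~\ref{T:theorem 2.7}, by constructing a countable orthogonal family $\{F_n\}$ of finite-trace projections, all commuting with $A$, which sum to $I$ and on each of which the excess/defect condition persists. Given such a family, Lemma~\ref{L:lemma 3.3} gives $(AF_n)_\pm=A_\pm F_n$, so $\tau((AF_n)_+)\geq\tau((AF_n)_-)$ in the type ${\rm II}_1$ factor $F_n\M F_n$; Theorem~\ref{T:theorem 2.7} then writes each $AF_n$ as a strong sum of projections, and the mutual orthogonality of the $F_n$'s lets these be assembled into a strong sum expressing $A$ itself. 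As a preliminary reduction I would pass to $R_A\M R_A$ so as to assume $R_A=I$; the case $\tau(R_A)<\infty$ is dispatched directly by Theorem~\ref{T:theorem 2.7}.

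To construct $\{F_n\}$, I would fix a diffuse abelian von Neumann subalgebra $\A$ containing $\{A\}''$ on which $\tau$ is semifinite. Write $P_-=\chi_A(0,1)$ and $P_+=\chi_A(1,\|A\|]$, and use semifiniteness to partition $P_-$ as an orthogonal sum $\sum_m P_-^m$ of finite-trace projections in $\A$. For each $m$ I would extract, via the finite-trace version of Lemma~\ref{L:lemma 3.2} applied inside a suitable finite-trace corner $Q_m\leq \chi_A(1+\delta_m,\|A\|]$ disjoint from the previously chosen $E_k$, a projection $E_m\in\A$ of finite trace with $\tau(A_+E_m)=\tau(A_-P_-^m)$, and set $F_m=P_-^m+E_m$, so that $\tau(A_+F_m)=\tau(A_-F_m)$. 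The leftover projection $I-\sum_m F_m$ is contained in $P_+\cup \chi_A\{1\}\cup \chi_A\{0\}$, where $A_-$ vanishes, and any further orthogonal partition of it into finite-trace projections in $\A$ contributes projections $G_k$ that trivially satisfy the excess/defect condition. The combined list $\{F_n\}=\{F_m\}\cup\{G_k\}$ is then the desired family.

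The main obstacle is the iterative feasibility of extracting $E_m$. When $\tau(A_+)=\infty$, the residual $A_+$-mass after finitely many steps is still infinite and any prescribed finite value is attainable. When $\tau(A_+)<\infty$, the bookkeeping
\[
\tau(A_+)-\sum_{k<m}\tau(A_-P_-^k)\;\geq\; \tau(A_+)-\tau(A_-)+\sum_{k\geq m}\tau(A_-P_-^k)\;\geq\;\tau(A_-P_-^m)
\]
shows that the hypothesis $\tau(A_+)\geq\tau(A_-)$ is exactly what keeps the construction going. A secondary technical point is that $E_m$ must have finite trace: choosing $\delta_m>0$ small enough that $\tau(A_+\chi_A(1+\delta_m,\|A\|])$ still exceeds the residual defect (possible because $\tau(A_+\chi_A(1+\delta,\|A\|])\uparrow \tau(A_+)$ as $\delta\downarrow 0$) forces $E_m\leq \chi_A(1+\delta_m,\|A\|]$ and hence $\tau(E_m)\leq \tau(A_+E_m)/\delta_m<\infty$. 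Richness of $\A$ is used throughout so that both the semifiniteness of $\tau|_\A$ and the attainment conclusion of Lemma~\ref{L:lemma 3.2} remain available inside the relevant finite-trace corners.
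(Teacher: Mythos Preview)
Your approach is sound and genuinely different from the paper's. The paper first invokes Theorem~6.6 of~\cite{KNZ} to dispose of the case $\tau(A_+)=\infty$, and then, for $\tau(A_-)\leq\tau(A_+)<\infty$, proceeds by iterative halving: at stage $n$ a finite projection $E_n$ commuting with $A$ is carved off with $\tau((AE_n)_+)=\tau((AE_n)_-)=2^{-n}\tau(A_+)$, Theorem~\ref{T:theorem 2.7} handles each $AE_n$, and the residue $A(I-\sum_n E_n)$ is shown to be a projection; the strict-inequality subcase $\tau(A_+)>\tau(A_-)$ is first reduced to equality by peeling off a single finite piece. Your one-shot partition $\{F_n\}$ is more uniform and, notably, yields a self-contained treatment of $\tau(A_+)=\infty$ without the external appeal to~\cite{KNZ}. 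The paper's halving device, on the other hand, automatically avoids a subtlety in your bookkeeping: your displayed inequality is not strict when $\tau(A_+)=\tau(A_-)<\infty$ and $P_-^m$ happens to be the last piece carrying positive $A_-$-mass, in which case no $\delta_m>0$ suffices and the required finite-trace $E_m$ may not exist. This is easily repaired by arranging the partition of $P_-$ to have infinitely many pieces with strictly positive $A_-$-mass (possible since $\A$ is diffuse), so that the residual excess strictly dominates $\tau(A_-P_-^m)$ at every step.
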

\begin{proof}
By Theorem 6.6 of~\cite{KNZ}, if $\tau(A_+)=\infty$, then $A$ is a strong sum of projections. So we may assume that $\tau(A_-)\leq \tau(A_+)<\infty$. We divide the proof into two cases.

Case 1. $\tau(A_+)=\tau(A_-)<\infty$. If $\tau(A_+)=\tau(A_-)=0$, then $A$ is a projection. So we may assume that $\tau(A_+)=\tau(A_-)>0$. By Lemma~\ref{L:lemma 3.2}, there are finite projections $e_1$ and $f_1$ such that $e_1A_+=A_+e_1$, $f_1A_-=A_-f_1$, $e_1\leq R_{A_+}$, $f_1\leq R_{A_-}$, and $\tau(e_1A_+)=\tau(f_1A_-)=\frac{1}{2}\tau(A_+)$. Since $A_+=(A-I)\chi_{A}(1,\|A\|]$ and $A_-=(I-A)\chi_A(0,1)$, $R_{A_+}=\chi_{A}(1,\|A\|]$ and $R_{A_-}=\chi_A(0,1)$. Thus
$e_1\leq \chi_{A}(1,\|A\|]$ and $f_1\leq \chi_A(0,1)$.
Then
\[
e_1A=e_1 (A-I)\chi_{A}(1,\|A\|]+e_1\chi_{A}(1,\|A\|]+e_1A\chi_{A}[0,1]\]\[=(A-I)\chi_{A}(1,\|A\|]e_1+\chi_{A}(1,\|A\|]e_1+A\chi_{A}[0,1]e_1=Ae_1
\]
and
\[
f_1A=f_1 (A-I)\chi_{A}(0,1)+f_1\chi_A(0,1)+f_1\chi_A{\{0\}}A+f_1\chi_A[1,\|A\|]A
\]
\[
=(A-I)\chi_{A}(0,1)f_1+\chi_A(0,1)f_1+\chi_A{\{0\}}Af_1+\chi_A[1,\|A\|]Af_1=Af_1.
\]
 Let $E_1=e_1+f_1$. Then $AE_1=E_1A$. By Lemma~\ref{L:lemma 3.3},
\[
(AE_1)_+=A_+E_1=A_+e_1
\]
and
\[
(AE_1)_-=A_-E_1=A_-f_1.
\]
Thus $\tau((AE_1)_+)=\tau((AE_1)_-)$. Note that $AE_1\in E_1\M E_1$ and $E_1\M E_1$ is type ${\rm II}_1$. By Theorem~\ref{T:theorem 2.7}, $AE_1$ is a strong sum of projections. Note that
\[
(A(I-E_1))_+=A_+(I-E_1)=A_+-A_+e_1
\]
and
\[
(A(I-E_1))_-=A_-(I-E_1)=A_- -A_-f_1.
\] Then $\tau((A(I-E_1))_+)=\tau((A(I-E_1))_-)=\frac{1}{2}\tau(A_+)<+\infty$. Let $A_1=A$ and $A_2=A(I-E_1)$. Then $A_2\leq A_1$. By Lemma~\ref{L:lemma 3.2}, there are finite projections $e_2$ and $f_2$ such that $e_2(A_2)_+=(A_2)_+e_2$, $f_2(A_2)_-=(A_2)_-f_2$, $e_2\leq R_{(A_2)_+}$, $f_2\leq R_{(A_2)_-}$, and $\tau(e_2(A_2)_+)=\tau(f_2(A_2)_-)=\frac{1}{2}\tau((A_2)_+)$. Since $(A_2)_+=(A_2-I)\chi_{A_2}(1,\|A_2\|]$ and $(A_2)_-=(I-A_2)\chi_{A_2}(0,1)$, $R_{(A_2)_+}=\chi_{A_2}(1,\|A_2\|]$ and $R_{(A_2)_-}=\chi_{A_2}(0,1)$. Thus
$e_2\leq \chi_{A_2}(1,\|A_2\|]\leq R_{(A_2)}\leq I-E_1$ and  $f_2\leq \chi_{A_2}(0,1)\leq R_{(A_2)}\leq I-E_1$.
Then
\[
e_2A=e_2A_2=e_2 (A_2-I)\chi_{A_2}(1,\|A_2\|]+e_2\chi_{A_2}(1,\|A_2\|]+e_2A_2\chi_{A_2}[0,1]\]\[=(A_2-I)\chi_{A_2}(1,\|A_2\|]e_2+\chi_{A_2}(1,\|A_2\|]e_2+A_2\chi_{A_2}[0,1]e_2=A_2e_2=Ae_2
\]
and
\[
f_2A=f_2A_2=f_2 (A_2-I)\chi_{A_2}(0,1)+f_2\chi_{A_2}(0,1)+f_2\chi_{A_2}{\{0\}}A_2+f_2\chi_{A_2}[1,\|A_2\|]A_2
\]
\[
=(A_2-I)\chi_{A_2}(0,1)f_2+\chi_{A_2}(0,1)f_2+\chi_{A_2}{\{0\}}A_2f_2+\chi_{A_2}[1,\|A_2\|]A_2f_2=A_2f_2=Af_2.
\]
 Let $E_2=e_2+f_2$. Then $AE_2=A_2E_2=E_2A_2=E_2A$. By Lemma~\ref{L:lemma 3.3},
\[
(AE_2)_+=(A_2E_2)_+=(A_2)_+E_2=(A_2)_+e_2
\]
and
\[
(AE_2)_-=(A_2E_2)_-=(A_2)_-E_2=(A_2)_-f_2.
\]
 Thus
 \[\tau((AE_2)_+)=\tau\left((A_2)_+e_2\right)=\frac{1}{2}\tau((A_2)_+)=\frac{1}{4}\tau(A_+)\]
 and
 \[\tau((AE_2)_-)=\tau\left((A_2)_-f_2\right)=\frac{1}{2}\tau((A_2)_-)=\frac{1}{4}\tau(A_-).\]
 Therefore, $\tau((AE_2)_+)=\tau((AE_2)_-)$. Note that $AE_2\in E_2\M E_2$ and $E_2\M E_2$ is type ${\rm II}_1$. By Theorem~\ref{T:theorem 2.7}, $AE_2$ is a strong sum of projections. By Lemma~\ref{L:lemma 3.3},
\[
(A(I-E_1-E_2))_+=
A_+(I-E_1-E_2)=A_+-A_+e_1-(A_2)_+e_2
\]
and
\[
(A(I-E_1-E_2))_-=A_-(I-E_1-E_2)=A_- -A_-f_1-(A_2)_-f_2.
\] Then
\[\tau((A(I-E_1-E_2))_+)=\tau((A(I-E_1-E_2))_-)=\frac{1}{4}\tau(A_+)<+\infty.\]
  Let $A_3=A(I-E_1-E_2)$. Then $A_3\leq A_2\leq A_1$.
By a similar argument, there is a finite projection $E_3\in \M$ such that $E_3\leq R_{A_3}\leq I-E_1-E_2$, $E_3A_3=E_3A=AE_3=A_3E_3$, and
 \[\tau((A_3E_3)_+)=\tau((A_3E_3)_-)=\frac{1}{8}\tau(A_+)<+\infty.\]
  By Lemma~\ref{L:lemma 3.3},
\[
(A(I-E_1-E_2-E_3))_+=
A_+(I-E_1-E_2-E_3)=A_+-A_+e_1-(A_2)_+e_2-(A_3)_+e_3
\]
and
\[
(A(I-E_1-E_2-E_3))_-=A_-(I-E_1-E_2-E_3)=A_- -A_-f_1-(A_2)_-f_2-(A_3)_-f_3.
\] Then
\[\tau((A(I-E_1-E_2-E_3))_+)=\tau((A(I-E_1-E_2-E_3))_-)=\frac{1}{8}\tau(A_+)<+\infty.\]

 By induction, there exist $A=A_1\geq A_2\geq A_3\geq\cdots$ and finite projections $E_1,E_2,E_3,\cdots$ of $\M$ such that $E_n\leq I-\sum_{k=1}^{n-1} E_k$, $AE_n=A_nE_n=E_nA_n=E_nA$, $A_n=A(I-\sum_{k=1}^{n-1}E_k)$ and
 \[
 \tau((AE_n)_+)=\tau((A_nE_n)_+)=\tau((AE_n)_-)=\tau((A_nE_n)_-)=\frac{1}{2^n}\tau(A_+)<+\infty.
 \]

 Let $B=A(I-\sum_{n=1}^\infty E_n)=\lim_n A_n$ in the strong operator topology. Then by Lemma~\ref{L:lemma 3.3},
 \[
 \tau(B_+)=\tau(A_+(I-\sum_{n=1}^\infty E_n))=0
 \]
 and
 \[
 \tau(B_-)=\tau(A_-(I-\sum_{n=1}^\infty E_n))=0.
 \]
 This implies that $B$ is a projection. By Theorem~\ref{T:theorem 2.7}, $AE_n\in E_n\M E_n$ is a strong sum of projections for every $n$. Thus
 \[
 A=\sum_{n=1}^\infty AE_n+B
 \]
  is a strong sum of projections.

 Case 2. Assume $\tau(A_-)<\tau(A_+)<\infty$. If $\tau(A_-)>0$, then $s=\tau(A_+)-\tau(A_-)<\tau(A_+)$. By Lemma~\ref{L:lemma 3.2}, there is a finite projection $E\in \M$ such that $E\leq R_{A_+}$, $EA=AE$ and $\tau(A_+E)=s$. Let $F=I-E$. Since $R_{A_+}=\chi_{A}(1,+\infty)$ and $R_{A_-}=\chi_A(0,1)$, $E$ is orthogonal to $R_{A_-}$. By Lemma~\ref{L:lemma 3.3},
 \[
 \tau((AE)_+)=\tau(A_+E)=s>0,
 \]
 \[
 \tau((AE)_-)=\tau(A_-E)=0,
 \]
 \[
 \tau((AF)_+)=\tau(A_+F)=\tau(A_+)-\tau(A_+E)=\tau(A_+)-s=\tau(A_-)\]
 and
 \[ \tau((AF)_-)=\tau(A_-F)=\tau(A_-).
 \]
 By Case 1, $AF$ is a strong sum of projections. Note that $EAE\in E\M E$ and $E\M E$ is a type ${\rm II}_1$ factor.  By Theorem~\ref{T:theorem 2.7}, $EAE$ is also a strong sum of projections. Thus $A$ is a strong sum of projections.

 Assume $\tau(A_-)=0$. Then by Lemma~\ref{L:lemma 3.2}, there is a sequence of finite projections $F_n$, $\sum F_n=R_{A_+}$ and $\tau(A_+ F_n)=\frac{1}{2^n}\tau(A_+)$. Now
 $A=\sum_n AF_n+A(I-R_{A_+})$. By Lemma~\ref{L:lemma 3.3},
 \[
 \tau((AF_n)_+)=\tau(A_+F_n)=\frac{1}{2^n}\tau(A_+)
 \]
 and
 \[
 \tau((AF_n)_-)=\tau(A_-F_n)=0.
 \]
  By Theorem~\ref{T:theorem 2.7}, $AF_n\in F_n\M F_n$ is a strong sum of projections. Note that
 \[
 \tau((A(I-R_{A_+}))_+)=\tau(A_+)-\sum_n \frac{1}{2^n}\tau(A_+)=0
 \]
 and
 \[
 \tau((A(I-R_{A_+}))_-)=\tau(A_-(I-R_{A_+}))=0.
 \]
 This implies that $A(I-R_{A_+})$ is a projection. Thus $A$ is a strong sum of projections.
\end{proof}

\noindent{\bf Remark:}\, If in Theorem~\ref{T:theorem 4.4} we assume that $\M$ is countably decomposable, then we have a simple proof of Case 1 as follows. By restricted on $R_A\M R_A$, we may assume that $R_A=I$. Then $A=A_+-A_-+I$. Let $X=A_+-A_-$. Then $A=X+I$. Since $\tau(A_+)=\tau(A_-)<\infty$, $X\in L^1(\M,\tau)$ and $\tau(X)=0$. By Theorem~\ref{T:theorem 2.4}, there is a sequence of projections $\{E_n\}_{n=1}^N$ such that $\sum_{n=1}^N E_n=I$ and $E_nXE_n=0$ for all $1\leq n\leq N$. Then $E_nAE_n=E_n$ for all $1\leq n\leq N$. By Lemma~\ref{L:lemma 4.1}, $A$ is a strong sum of projections.

\section{A new proof of type ${\rm I}$ case}

\begin{thm}
Let $(\M,\tau)$ be a type ${\rm I}_n$ factor with the trace $\tau$ such that $\tau(I)=n$. If $A\in \M^+$ satisfies $\tau(A)\in \NNN\cup\{0\}$ and $\tau(A)\geq \tau(R_A)$, then $A$ is the sum of projections.
\end{thm}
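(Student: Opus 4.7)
The plan is to induct on $m = \tau(A) \in \NNN \cup \{0\}$, mirroring the strategy of Theorem~\ref{T:theorem 2.7}: reduce to the case $\tau(A) = \tau(R_A)$ where Theorem~\ref{T:theorem 2.4} applies directly, and otherwise peel off one rank-one projection at a time. The base case $m = 0$ is immediate, since the hypothesis $\tau(R_A) \leq 0$ forces $R_A = 0$ and so $A = 0$ is the empty sum.

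For the inductive step, I first handle the matched case $m = \tau(R_A) =: r$. Passing to the corner $R_A \M R_A$, which is a type ${\rm I}_r$ finite factor with trace $r$ on its identity, the element $X := A - R_A$ is self-adjoint and satisfies $\tau(X) = 0$. Theorem~\ref{T:theorem 2.4} then provides mutually orthogonal nonzero projections $\{E_j\}$ in $R_A \M R_A$ with $\sum_j E_j = R_A$ and $E_j A E_j = E_j$ for each $j$. The implication (1)$\Rightarrow$(2) in Lemma~\ref{L:lemma 4.1}, which by the remark following it needs only the decomposition and not a particular normalization of $\tau(A)$, then expresses $A$ as a finite sum of projections inside $R_A \M R_A \subseteq \M$.

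The remaining case is $m > r$. Here the average spectral value of $A$ on its support exceeds $1$, so $A$ has some eigenvalue $\lambda > 1$. I would choose $P$ to be a rank-one subprojection of the spectral projection $\chi_A(\{\lambda\})$, so that $\tau(P) = 1$, $P$ commutes with $A$, and $AP = \lambda P$. Spectrally, $A - P$ has eigenvalue $\lambda - 1 > 0$ on the range of $P$, eigenvalue $\lambda > 0$ on $\chi_A(\{\lambda\}) - P$, and the same positive eigenvalues as $A$ on the other spectral subspaces supported in $R_A$; hence $A - P \in \M^+$ with $R_{A - P} = R_A$ and $\tau(A - P) = m - 1 \geq r = \tau(R_{A-P})$, still an integer. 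The inductive hypothesis applies to $A - P$, giving $A - P = Q_1 + \cdots + Q_k$, and then $A = P + Q_1 + \cdots + Q_k$ is the desired decomposition.

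The main point requiring care is ensuring $P \leq A$ and that $R_{A-P}$ does not shrink below what the induction can absorb. Both are handled by placing $P$ inside the spectral subspace for an eigenvalue strictly greater than $1$, so the only quantity changing across the induction is $\tau(A)$, which decreases by $1$ at each step and drives the argument to the base case supplied by Theorem~\ref{T:theorem 2.4} and Lemma~\ref{L:lemma 4.1}.
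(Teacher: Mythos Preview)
Your proof is correct and follows essentially the same approach as the paper: both arguments peel off rank-one projections from an eigenspace with eigenvalue greater than $1$ until $\tau(A)=\tau(R_A)$, and then conclude via Theorem~\ref{T:theorem 2.4} and the (1)$\Rightarrow$(2) direction of Lemma~\ref{L:lemma 4.1}. The only differences are organizational---you set up an explicit induction on $m=\tau(A)$ and restrict to the corner at the matched step, whereas the paper restricts to $R_A\M R_A$ first and then informally inducts down---but the substance is identical.
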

\begin{proof}
By restricted on $R_A\M R_A$, we may assume that $R_A=I$. Let $\lambda_1\geq \lambda_2\geq\cdots\geq \lambda_n$ be eigenvalues of $A$. If $\tau(A)>\tau(I)=n$, then $\lambda_1>1$. Let $e$ be a unit eigenvector corresponding to $\lambda_1$, then $A-e\otimes e\in \M^+$, $\tau(A-e\otimes e)\in \NNN$, and $\tau(A-e\otimes e)\geq n$. By induction, we may assume that $\tau(A)=\tau(I)=n$. Let $X=A-I$. Then $X=X^*$ and $\tau(X)=0$. By Theorem~\ref{T:theorem 2.4},
there are finite nonzero projections $E_1+\cdots+E_N=I$ such that $E_iXE_i=0$ for $1\leq i\leq N$. Then
 $E_iAE_i=E_i$ for $1\leq i\leq N$. By Lemma~\ref{L:lemma 4.1}, $A$ is the sum of $N$ projections.
\end{proof}

\begin{thm}
Let $\H$ be the infinite dimensional separable Hilbert space and let $\M=\B(\H)$ with the classical tracial weight $\rm{Tr}$. Suppose $A\in \M^+$. If either ${\rm Tr}(A_+)=\infty$ or ${\rm Tr}(A_-)\leq {\rm Tr}(A_+)<\infty$ and ${\rm Tr}(A_+)-{\rm Tr}(A_-)\in \NNN\cup \{0\}$, then $A$ is a strong sum of projections.
\end{thm}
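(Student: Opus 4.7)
I would split on the two cases in the hypothesis. Case (I) is ${\rm Tr}(A_+) = \infty$; Case (II) is ${\rm Tr}(A_-) \leq {\rm Tr}(A_+) < \infty$ with $k := {\rm Tr}(A_+) - {\rm Tr}(A_-) \in \NNN \cup \{0\}$. Case (I) I would dispatch by appealing to Theorem 6.6 of~\cite{KNZ}, which applies in any semifinite factor and was invoked in the same way in the proof of Theorem~\ref{T:theorem 4.4} for type ${\rm II}_\infty$. The new content is in Case (II), which I would treat in close parallel with the preceding type ${\rm I}_n$ theorem and with Theorem~\ref{T:theorem 2.7}.

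\textbf{Case (II).} First pass to $R_A \M R_A$; if $R_A$ has finite rank the preceding type ${\rm I}_n$ theorem finishes the job, since the hypothesis forces ${\rm Tr}(A) = k + {\rm Tr}(R_A) \in \NNN$ with ${\rm Tr}(A) \geq {\rm Tr}(R_A)$. Otherwise $R_A \M R_A \cong \B(\mathcal{K})$ with $\mathcal{K}$ separable infinite-dimensional, and I relabel so $R_A = I$. If $k > 0$ I iteratively peel off rank-one projections $e_i \otimes e_i$ at a top eigenvector of the current operator, which exists whenever the signed excess ${\rm Tr}(A_+) - {\rm Tr}(A_-)$ is still positive. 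A direct eigenvalue check, splitting on whether the top eigenvalue $\lambda$ satisfies $\lambda \geq 2$ or $\lambda \in (1, 2)$, shows that the result remains in $\M^+$, that the support is preserved (the new eigenvalue $\lambda - 1$ is strictly positive), and that ${\rm Tr}(A_+) - {\rm Tr}(A_-)$ drops by exactly $1$ at each step. After $k$ iterations I obtain $A'$ with ${\rm Tr}(A'_+) = {\rm Tr}(A'_-) < \infty$; if $A'$ is a projection we are done, otherwise $X := A' - I \in \M \cap L^1(\M, {\rm Tr})$ with ${\rm Tr}(X) = 0$. Corollary~\ref{C:Corollary 2.5}, applicable because $\B(\H)$ is countably decomposable, then supplies a countable family of mutually orthogonal nonzero projections $\{E_n\}$ with $\sum_n E_n = I$ and $E_n X E_n = 0$, equivalently $E_n A' E_n = E_n$. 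The $(1) \Rightarrow (2)$ direction of Lemma~\ref{L:lemma 4.1}, valid in arbitrary semifinite factors by the remark following that lemma (with ``${\rm Tr}(A') = {\rm Tr}(I)$'' in place of ``${\rm Tr}(A') = 1$''), expresses $A'$ as a strong sum of the projections $P_j = (A')^{1/2} E_j (A')^{1/2}$. Combined with the $k$ peeled-off rank-one projections, this represents $A$ as a strong sum of projections.

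\textbf{Main obstacle.} The principal subtlety is the bookkeeping in the peeling step: verifying that each subtraction of a rank-one projection at a top eigenvector preserves positivity and support and drops the integer ${\rm Tr}(A_+) - {\rm Tr}(A_-)$ by exactly $1$, which splits into the two subcases for $\lambda$ relative to $2$. Once this check is in place, everything else is a clean application of the isotropy-decomposition Corollary~\ref{C:Corollary 2.5} and the ``$E_j A E_j = E_j$ implies sum of projections'' correspondence from Lemma~\ref{L:lemma 4.1}, exactly as in the type ${\rm I}_n$ and type ${\rm II}_1$ arguments.
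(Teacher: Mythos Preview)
Your proposal is correct and follows essentially the same route as the paper: dispatch ${\rm Tr}(A_+)=\infty$ via Theorem~6.6 of~\cite{KNZ}, reduce to $R_A=I$, peel off rank-one projections at eigenvectors of $A_+$ to bring ${\rm Tr}(A_+)-{\rm Tr}(A_-)$ down to $0$, then apply the isotropy decomposition (Theorem~\ref{T:theorem 2.4}/Corollary~\ref{C:Corollary 2.5}) to $X=A'-I$ and finish with Lemma~\ref{L:lemma 4.1}. The only differences are cosmetic: you separate out the finite-rank $R_A$ case and are more explicit about the $\lambda\geq 2$ versus $\lambda\in(1,2)$ bookkeeping (the paper simply asserts the needed facts), and you cite Corollary~\ref{C:Corollary 2.5} where the paper cites Theorem~\ref{T:theorem 2.4}.
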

\begin{proof}
The case ${\rm Tr}(A_+)=\infty$ follows from Theorem 6.6 of~\cite{KNZ}. In the following, we assume that ${\rm Tr}(A_-)\leq {\rm Tr}(A_+)<\infty$ and ${\rm Tr}(A_+)-{\rm Tr}(A_-)\in \NNN\cup \{0\}$. By restricted on $R_A\M R_A$, we may assume that $R_A=I$. Since $A_+$ and $A_-$ are trace-class operators and supported on orthogonal subspaces, they are simultaneously diagonalizable, so we can set $A_-=\sum_{i=1}^M\lambda_i(f_i\otimes f_i)$, $A_+=\sum_{j=1}^N \mu_j (e_j\otimes e_j)$, where $M,N\in\NNN\cup\{0\}\cup\{\infty\}$, $\{f_i,e_j\}$ are mutually orthogonal unit vectors, and $0<\lambda_i<1$, $\mu_j>0$ for all $i$ and $j$. Since $A=A_+-A_-+I$, $A$ is a diagonalizable operator. If ${\rm Tr}(A_+)-{\rm Tr}(A_-)>0$, then we have $N\geq 1$ and $A=(A-e_1\otimes e_1)+e_1\otimes e_1$. Note that $R_{A-e_1\otimes e_1}=I$ and ${\rm Tr}((A-e_1\otimes e_1)_+)-{\rm Tr}((A-e_1\otimes e_1)_-)\geq 0$. By induction, we may assume that $R_A=I$ and  ${\rm Tr}(A_+)-{\rm Tr}(A_-)=0$. Let $X=A_+-A_-$. Then ${\rm Tr}(X)=0$. By Theorem~\ref{T:theorem 2.4}, there are mutually orthogonal nonzero projections $E_1,\cdots,E_N$ such that $E_1+\cdots+E_N=I$ and $E_kXE_k=0$ for each $1\leq k\leq N$. Now we have $E_kAE_k=E_k(I+X)E_k=E_k$ for each $1\leq k\leq N$. Thus by Lemma~\ref{L:lemma 4.1}, $A$ is a sum of $N$ projections.
\end{proof}

\end{document}